\newcommand{\ud}{\mathrm{d}}
\newtheorem{theorem}{Theorem}[section]
\newtheorem{lemma}{Lemma}[section]
\newtheorem{remark}{Remark}[section]
\newtheorem{definition}{Definition}[section]
\newtheorem{proposition}{Proposition}[section]
\begin{document}
\title[Stability of critical points for Onsager functional]{On stability of critical points for the high dimensional Onsager functional with Maier-Saupe potential}

\author{Sisi Guan}
\author{Wei Wang}
\author{Qi Zhao}
\address{School of Mathematical Sciences, Zhejiang University, Hangzhou 310058, China}
\email{sisiguan@zju.edu.cn}
\email{wangw07@zju.edu.cn}
\email{zhaoq27@zju.edu.cn}

\renewcommand{\theequation}{\thesection.\arabic{equation}}
\setcounter{equation}{0}

%\allowdisplaybreaks[4]
%%%%%%%%%%%%%%%%%%%%%%%%%%%%%%%%%%%%%%%%%%%%%%

\begin{abstract}
We study the stability of the critical points of the Onsager energy functional with Maier-Saupe interaction potential in general dimensions. We show that the stable critical points must be axisymmetric, which solves a problem proposed by [H. Wang, P. Hoffman, Commun. Math. Sci. 6 (2008), 949--974]  and further conjectured by [P. Degond, A. Frouvelle, J.-G. Liu, Kinetic and Related Models, 15(2022), 417--465]. The main ingredients of the proof include a suitable decomposition of the second variation around the critical points and a detailed analysis of the relation between the intensity of interaction and the order parameter characterizing the anisotropy of the solution.
\end{abstract}
%\date{\today}
\maketitle

\numberwithin{equation}{section}

\section{Introduction}
We study the stability of all critical points to the energy functional on the sphere $\mathbb{S}^{n-1}$:
\begin{align}\label{energy:onsager}
  \mathcal{A}[f]=\int_{\mathbb{S}^{n-1}}f(m)\log f(m)\ud m+\frac{\alpha}{2}\int_{\mathbb{S}^{n-1}\times\mathbb{S}^{n-1}}
  \big(1-(m\cdot m')^2\big) f(m) f(m')\ud m \ud m'.
\end{align}
Here $f(m)$ is a positive function on $\mathbb{S}^{n-1}$  with $\int_{\mathbb{S}^{n-1}}f(m)\ud m=1$ and $\alpha$ is a positive parameter.
The Euler-Lagrange equation reads as
\begin{align}\label{eq:EL}
  \ln f(m)-\alpha \int_{\mathbb{S}^{n-1}}(m\cdot m')^2 f(m')\ud m'=\mathrm{const.}\quad \text{ on }\mathbb{S}^{n-1}.
\end{align}
A solution to (\ref{eq:EL}) is called a {\it critical point} of $\mathcal{A}[f]$.
Let $I_n$ be the identity matrix and
\begin{align}\label{def:M}
  M=\alpha\int_{\mathbb{S}^{n-1}}\Big(m\otimes m-\frac{1}{n}I_n\Big)f(m)\ud m,
\end{align}
which is a trace free $n\times n$ symmetric matrix.
Then by \eqref{eq:EL} we can write
\begin{align} \label{eq:f}
  f(m)=\frac{1}{Z}e^{M:(m\otimes m)},\quad \text{with } Z= \int_{\mathbb{S}^{n-1}}e^{M:(m\otimes m)}\ud m.
\end{align}
Combining \eqref{def:M}-\eqref{eq:f}, we get that $M$ satisfies the nonlinear equation:
\begin{align}\label{eq:M}
 M =\frac{\alpha\int_{\mathbb{S}^{n-1}}\Big(m\otimes m-\frac{1}{n}I_n\Big)e^{M:(m\otimes m)}\ud m}{
  \int_{\mathbb{S}^{n-1}}e^{M:(m\otimes m)}\ud m}.
\end{align}

When $n=3$, the functional \eqref{energy:onsager} is the well-known Onsager energy functional with the Maier-Saupe interaction potential $1-(m\cdot m')^2$.
It is a fundamental model and has been widely used to characterize the phase transition from the isotropic state to the nematic state for liquid crystals.
The function $f(m)$ represents the probability distribution of rod-like molecules with orientation $m\in\mathbb{S}^{2}$ for a nematic liquid crystal. The first term in \eqref{energy:onsager} is due to entropy, while the second characterizes interaction energy between pairs of molecules and the parameter $\alpha$ measures the molecular interaction intensity.
The critical points of \eqref{energy:onsager} for $n=3$ have been completely classified in \cite{LZZ, FS}. In particular, it is shown that
all critical points are axially symmetric, i.e., they must be functions depending only on $m\cdot \nu$ for some $\nu\in\mathbb{S}^{2}$. Some different proofs are given in \cite{ZWFW, Bal}. For the stability of these critical points (for $n=3$), we refer to \cite{ZW} for a direct proof and \cite{WZZ} for the estimate of the second variation near the critical points.

The high dimensional ($n\ge4$) Onsager model was introduced by Wang-Hoffman \cite{WH}, in which it is proved: if a symmetric trace free matrix $M$ satisfies \eqref{eq:M}, then $M$ has only two distinct eigenvalues $\lambda_1$ and $\lambda_2$. For the completeness of this paper, we present another proof in Appendix by the method of J. Ball \cite{Bal} used for $n=3$. Assume that $\lambda_1$ and $\lambda_2$ occur $k$ and $n-k$ times ($1\le k\le n-1$) respectively. Then
$k\lambda_1+(n-k)\lambda_2=0$, and we have $\lambda_1=\frac{\eta(n-k)}{n}$, $\lambda_2=\frac{-\eta k}{n}$ for some $\eta\in\mathbb{R}$.

Firstly,  we assume that $M$ is diagonal, i.e.,
\begin{align}\nonumber
  M=\frac{\eta}{n}\mathrm{diag}\, \big\{\underbrace{n-k,\cdots, n-k}_{k \text{ times}},\underbrace{-k,\cdots,-k}_{n-k \text{ times}}\big\}.
\end{align}
For $m\in\mathbb{S}^{n-1}$, we use the representation
\begin{align}\label{decomp:m}
m=(\sin\theta\omega, \cos\theta\xi)\text{ with }\omega=(\omega_i)_{1\le i\le k}\in\mathbb{S}^{k-1},\  \xi=(\xi_j)_{1\le i\le n-k}\in\mathbb{S}^{n-k-1},\ \theta\in[0,\frac\pi2].
\end{align}
The area element of $\mathbb{S}^{n-1}$ can be written as
 \begin{align}\label{decomp:measure}
   \ud m=\ud\mu_k^{\theta}\ud\omega\ud \xi,
   \quad \text{where }\ud\mu_k^{\theta}:=\sin^{k-1}\theta \cos^{n-k-1}\theta \ud\theta.
 \end{align}
Then we obtain from \eqref{eq:f} that the corresponding distribution function takes the form:
\begin{equation}\label{equ:critical-0}
f(m)=h^{(k)}_{\eta}(m):=\frac{e^{\lambda_1\sum_{i=1}^k m_i^2+\lambda_2\sum_{i=k+1}^n m_i^2}}{\int_{\mathbb{S}^{n-1}}e^{\lambda_1\sum_{i=1}^k m_i^2+\lambda_2\sum_{i=k+1}^n m_i^2}\ud m}%=\frac{e^{\eta\sum_{i=1}^k m_i^2}}{\int_{\mathbb{S}^{n-1}}e^{\eta\sum_{i=1}^k m_i^2} \ud m}
=\frac{e^{\eta\sin^2\theta}}{\int_{\mathbb{S}^{n-1}}e^{\eta\sin^2\theta} \ud m}.
\end{equation}
 Equation (\ref{eq:M}) gives the relation between $\eta$ and $\alpha$ for given $k=1,2,\cdots, n-1$:
\begin{align}\label{eqn:eta-alpha}
\frac{(n-k)\eta}{n}\int_0^{\frac{\pi}{2}}e^{\eta\sin^2\theta} \ud\mu_k^{\theta}
=\frac{\alpha}{nk}\int_0^{\frac{\pi}{2}}e^{\eta\sin^2\theta}(n\sin^2\theta-k)\ud\mu_k^{\theta}.
\end{align}
Equation (\ref{eqn:eta-alpha}) holds if and only if $\eta=0$ or
\begin{align}\nonumber
\alpha=\sigma_k(\eta):=\frac{(n-k)k\int_0^{\frac{\pi}{2}}e^{\eta\sin^2\theta}\ud\mu_k^{\theta}}{2\int_0^{\frac{\pi}{2}}e^{\eta\sin^2\theta}\sin^2\theta\cos^2\theta \ud\mu_k^{\theta}}.
\end{align}
Here, we have used the relation:
\begin{align*}
2\eta  \int_0^{\frac{\pi}{2}}e^{\eta\sin^2\theta}\sin^2\theta\cos^2\theta \ud\mu_k^{\theta}
-\int_0^{\frac{\pi}{2}}e^{\eta\sin^2\theta}(n\sin^2\theta-k)\ud\mu_k^{\theta}\\
= \int_0^{\frac{\pi}{2}}\partial_\theta(e^{\eta\sin^2\theta}\sin^{k}\theta \cos^{n-k}\theta )\ud \theta=0.
\end{align*}
Apparently, there holds $\sigma_k(\eta)=\sigma_{n-k}(-\eta)$.

Note that, when $\eta=0$, the corresponding distribution $f(m)$ is just $\frac{1}{|\mathbb{S}^{n-1}|}$, which implies that the uniform distribution $\frac{1}{|\mathbb{S}^{n-1}|}$ is always a critical point for all $\alpha>0$.

For general $M$ which satisfies \eqref{eq:M}, there exists  $R\in O(n)$ such that
\begin{align*}
M=R\,\mathrm{diag}\,\Big\{\underbrace{\frac{\eta(n-k)}{n},\cdots, \frac{\eta(n-k)}{n}}_{k \text{ times}},\underbrace{-\frac{\eta k}{n},\cdots,-\frac{\eta k}{n}}_{n-k \text{ times}}\Big\}\,R^T.
\end{align*}
Thus, the corresponding equilibrium distribution $f$ in (\ref{eq:f}) can be written as
\begin{align}\label{equ:critical-1}
 f(m)=h^{(k)}_{\eta,R}(m):= \frac{e^{\eta|P_k Rm|^2}}{ \int_{\mathbb{S}^{n-1}}e^{\eta|P_k Rm|^2}\ud m}.
\end{align}
Here $P_k$ is the projection operator: $P_km=(m_1,m_2,\cdots, m_k,0,\cdots, 0)$ for $m=(m_1,m_2,\cdots, m_n)$.
In particular, for $k=1$, let $\nu=(R_{1i})_{1\le i\le n}\in \mathbb{S}^{n-1}$. Then we may write $ h^{(1)}_{\eta,R}(m)$ as
\begin{align*}
\frac{e^{\eta(m\cdot\nu)^2}}{ \int_{\mathbb{S}^{n-1}}e^{\eta(m\cdot\nu)^2}\ud m}\triangleq h_\nu(m),
\end{align*}
which is axially symmetric (with axis $\nu\in \mathbb{S}^{n-1}$).

By the above discussion, we can obtain the complete classification of critical points to \eqref{energy:onsager}.
\begin{proposition}[\cite{WH}]
All the critical points of $\mathcal{A}[f]$ are given by the uniform distribution $h_0:=\frac{1}{|\mathbb{S}^{n-1}|}$ and $h^{(k)}_{\eta,R}(m)$ (defined in \eqref{equ:critical-1}), where $R\in O(n)$,  $k\in\{1,2,\cdots, [\frac{n}{2}]\}, \eta\in\mathbb{R}$ which satisfy
\begin{align*}
  \alpha =\sigma_k(\eta).
\end{align*}
\end{proposition}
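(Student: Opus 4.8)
The plan is to reduce the classification of critical points to the classification of solutions of the self-consistency equation \eqref{eq:M}, and then to assemble the ingredients established above. First I would note that, by the derivation of \eqref{eq:f} from \eqref{eq:EL}, every critical point $f$ has the form $f(m)=\frac1Z e^{M:(m\otimes m)}$ for a trace-free symmetric matrix $M$ solving \eqref{eq:M}, and conversely every solution $M$ of \eqref{eq:M} produces a critical point via \eqref{eq:f}. Hence classifying critical points is the same as classifying solutions $M$ of \eqref{eq:M}.

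Next I would invoke the Wang-Hoffman spectral result (reproved in the Appendix): any trace-free symmetric $M$ solving \eqref{eq:M} has at most two distinct eigenvalues. The case $M=0$ yields exactly the uniform distribution $h_0$. Otherwise $M$ has two distinct eigenvalues $\lambda_1,\lambda_2$ of multiplicities $k$ and $n-k$ with $1\le k\le n-1$, and the trace-free condition $k\lambda_1+(n-k)\lambda_2=0$ forces $\lambda_1=\frac{\eta(n-k)}{n}$, $\lambda_2=-\frac{\eta k}{n}$ for a unique $\eta\in\mathbb{R}\setminus\{0\}$. Diagonalizing $M=R\,\mathrm{diag}\{\dots\}\,R^T$ with $R\in O(n)$ then gives the distribution $h^{(k)}_{\eta,R}$ of \eqref{equ:critical-1}.

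It remains to identify the admissible pairs $(\eta,\alpha)$ and to fix the range of $k$. Substituting the diagonal form into \eqref{eq:M} and integrating out the variables $\omega,\xi$ via the measure decomposition \eqref{decomp:measure} collapses the matrix equation to the scalar identity \eqref{eqn:eta-alpha}, which, as verified above, holds if and only if $\eta=0$ or $\alpha=\sigma_k(\eta)$; this is the asserted constraint. Finally I would exploit the symmetry $\sigma_k(\eta)=\sigma_{n-k}(-\eta)$ together with the fact that interchanging the two eigenspaces (an orthogonal map that can be absorbed into $R$) identifies the matrix with data $(k,\eta)$ with the one carrying data $(n-k,-\eta)$. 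Consequently every $h^{(k)}_{\eta,R}$ with $k>\frac n2$ already occurs as some $h^{(n-k)}_{-\eta,R'}$ with $n-k\le\frac n2$, so it suffices to let $k$ range over $\{1,\dots,[\frac n2]\}$. Collecting $h_0$ with this reduced family gives precisely the list in the Proposition.

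The only genuinely analytic input — that $M$ admits at most two distinct eigenvalues — is exactly the Wang-Hoffman theorem, so once it is granted the remaining argument is purely algebraic. I expect the delicate point to be the last reduction of the index range: one must check carefully that the relation $\sigma_k(\eta)=\sigma_{n-k}(-\eta)$ indeed pairs distributions across the involution $(k,\eta)\mapsto(n-k,-\eta)$, so that truncating to $k\le[\frac n2]$ neither discards a critical point nor introduces duplicates.
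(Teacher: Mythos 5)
Your proposal is correct and follows essentially the same route as the paper, which proves this Proposition via the discussion preceding it: reduction to the self-consistency equation \eqref{eq:M}, the Wang--Hoffman two-eigenvalue theorem (reproved in the Appendix), the scalar reduction to \eqref{eqn:eta-alpha} yielding $\alpha=\sigma_k(\eta)$, and the identification $h^{(n-k)}_{\eta,R}=h^{(k)}_{-\eta,\tilde RR}$ to restrict $k$ to $\{1,\dots,[\frac n2]\}$. The ``delicate point'' you flag is handled in the paper exactly as you suggest, by absorbing the coordinate swap into an explicit reflection composed with $R$.
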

The critical point $h_0$ is called an {\it isotropic} equilibria and $h^{(k)}_{\eta,R}(m)$ is called {\it anisotropic}. The parameter $\eta$ is called order parameter which describes the anisotropy of critical points $h^{(k)}_{\eta,R}(m)$.

For general $n\ge 4$, it is unclear that whether these solutions are stable except the case $n =4$ solved by Frouvelle \cite{Frou}. In particular, Degond-Frouvelle-Liu \cite{DFL} conjectured that the only stable anisotropic equilibria occurs when $k=1$ and $\eta>\eta_1^*:=\mathrm{argmin }\,\sigma_1$, and thus must be axisymmetric. They formally derived the Ericksen-Leslie model from the dynamical Doi-Onsager model for nematic liquid crystal flow in general dimensions by assuming the conjecture is true.

In this paper, we give a rigorous proof to this conjecture.
Our first main result can be stated as follows, which shows that, the stable anisotropic critical points must be axisymmetric.
Define the space of perturbations:
$$V=\Big\{\phi(m)\in L^2(\mathbb{S}^{n-1}):\ \int_{\mathbb{S}^{n-1}}\phi(m) \ud m=0\Big\}.$$
\begin{definition}
A critical point $f_0$ of $ \mathcal{A}[f]$ is called to be stable, if the corresponding second variation around it is nonnegative definite, i.e.,
\begin{align*}
\Big\langle \frac{\delta^2 \mathcal{A}}{\delta f^2}\Big|_{f=f_0} \phi,\  \phi\Big\rangle_{L^2(\mathbb{S}^{n-1})} \ge 0,\quad \text{ for any }\phi\in V.
\end{align*}
\end{definition}
\begin{theorem}\label{thm:1}
(i) The isotropic equilibiria $h_0=\frac{1}{|\mathbb{S}^{n-1}|}$ is stable if and only if $\alpha\le \frac{n(n+2)}{2}$;\\
(ii) All $h^{(k)}_{\eta,R}(m)$ for $2\le k\le [\frac{n}{2}]$ are not stable;\\
(iii) For $k=1$, $h^{(1)}_{\eta,R}(m)$ is stable when $\eta>\eta_1^*$ and unstable when $\eta<\eta_1^*$, where $\eta_1^*:=\mathrm{argmin }\,\sigma_1$.
\end{theorem}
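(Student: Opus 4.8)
The plan is to compute the second variation explicitly, reduce it to a finite-dimensional quadratic form, and diagonalize that form using the $O(k)\times O(n-k)$ symmetry of the critical point. Writing $f=f_0+\phi$ with $\phi\in V$, a direct computation gives
\begin{align*}
\Big\langle \frac{\delta^2 \mathcal{A}}{\delta f^2}\Big|_{f=f_0}\phi,\ \phi\Big\rangle
=\int_{\mathbb{S}^{n-1}}\frac{\phi^2}{f_0}\,\ud m
+\alpha\int_{\mathbb{S}^{n-1}\times\mathbb{S}^{n-1}}\big(1-(m\cdot m')^2\big)\phi(m)\phi(m')\,\ud m\,\ud m'.
\end{align*}
Since $\int\phi=0$ the constant term drops, and the double integral becomes $-\alpha\,A:A$ with $A_{ij}:=\int_{\mathbb{S}^{n-1}}m_im_j\phi\,\ud m$ symmetric and, because $\sum_i m_i^2=1$, trace free. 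Thus $Q[\phi]:=\int\phi^2/f_0-\alpha\,A:A$. Introducing the weighted inner product $(\phi,\psi)_\ast=\int\phi\psi f_0^{-1}\,\ud m$ and $u_{ij}:=m_im_jf_0$, one has $A_{ij}=(u_{ij},\phi)_\ast$, so $Q[\phi]=\|\phi\|_\ast^2-\alpha\sum_{ij}(u_{ij},\phi)_\ast^2$. Projecting $\ast$-orthogonally onto $U=\mathrm{span}\{u_{ij}\}$ gives $Q[\phi]=Q[\phi_\parallel]+\|\phi_\perp\|_\ast^2$, so stability is equivalent to $Q\ge0$ on the finite-dimensional space $U\cap V=\{(C:(m\otimes m))f_0:\ C=C^T,\ \int(C:(m\otimes m))f_0\,\ud m=0\}$. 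By rotation invariance I may take $R=I$, so $f_0=h^{(k)}_\eta$ depends only on $\theta$; I write $\langle g\rangle:=\int g\,f_0\,\ud m$ for the $f_0$-average.

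Next I would diagonalize the form $C\mapsto C:(S:C)-\alpha(S:C):(S:C)$ with $S_{ijkl}=\int m_im_jm_km_l f_0\,\ud m$. As $S$ commutes with $O(k)\times O(n-k)$, the symmetric matrices split into four mutually $\ast$-orthogonal, $S$-invariant sectors: the scalar pair $\{P_k,P_k^\perp\}$ (of which one direction survives the mean-zero constraint), the trace-free block on the first $k$ coordinates, the trace-free block on the last $n-k$ coordinates, and the off-diagonal block. Using $m=(\sin\theta\,\omega,\cos\theta\,\xi)$ and the factorization of $f_0\,\ud m$ into independent $\theta,\omega,\xi$, the three non-scalar sectors are eigenspaces of $S$ whose eigenvalues involve only $\langle\sin^4\theta\rangle,\langle\cos^4\theta\rangle,\langle\sin^2\theta\cos^2\theta\rangle$; on the one surviving scalar direction $\phi_0=(\sin^2\theta-\langle\sin^2\theta\rangle)f_0$ I evaluate $Q$ directly. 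The decisive computations are: the off-diagonal sector gives exactly $\alpha s_{12}=1$ (the rotational zero modes, consistent with $\alpha=\sigma_k(\eta)$); the first block yields the instability criterion $\langle\sin^4\theta\rangle/\langle\sin^2\theta\rangle>(k+2)/(n+2)$; the second block yields $\langle\cos^4\theta\rangle/\langle\cos^2\theta\rangle>(n-k+2)/(n+2)$; and the scalar direction gives $Q[\phi_0]=V\big(1-\alpha\frac{n}{k(n-k)}V\big)$ with $V=\langle\sin^4\theta\rangle-\langle\sin^2\theta\rangle^2$.

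For (i), $f_0$ is constant, the full $O(n)$ symmetry merges all non-scalar sectors into a single eigenspace of $S$ with eigenvalue $2/(n(n+2))$, so $Q\ge0$ iff $\alpha\le n(n+2)/2$. For (ii) I would prove the monotonicity of $r(\eta):=\langle\sin^4\theta\rangle/\langle\sin^2\theta\rangle$: with $X=\sin^2\theta$ under the tilted measure $\propto e^{\eta X}$ one finds $r'(\eta)=(\langle X\rangle\langle X^3\rangle-\langle X^2\rangle^2)/\langle X\rangle^2\ge0$ by Cauchy--Schwarz, strictly since $X$ is non-degenerate. Because $r(0)=(k+2)/(n+2)$ from the uniform moments, the first-block criterion holds for every $\eta>0$; the symmetric statement, that $\langle\cos^4\theta\rangle/\langle\cos^2\theta\rangle$ decreases in $\eta$ with value $(n-k+2)/(n+2)$ at $\eta=0$, gives the second-block criterion for every $\eta<0$. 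Since $2\le k\le[\frac n2]$ forces both $k\ge2$ and $n-k\ge2$, both blocks are present, so every anisotropic $h^{(k)}_{\eta,R}$ ($\eta\ne0$) has a strictly negative direction.

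For (iii) with $k=1$ the first block is empty, so only the scalar and second-block conditions can fail; the second block is unstable exactly for $\eta<0$. For the scalar direction the key identity is $nV-2W=2\eta W'$ with $W=\langle\sin^2\theta\cos^2\theta\rangle$, obtained by differentiating the self-consistency relation $2\eta W=n\langle\sin^2\theta\rangle-k$ already recorded in the text; since $\sigma_1=\frac{k(n-k)}{2W}$, scalar stability $nV\le2W$ is equivalent to $\eta W'\le0$, i.e. to $\sigma_1'(\eta)\ge0$ when $\eta>0$. A short moment computation gives $\sigma_1'(0)<0$, so the minimizer $\eta_1^*$ is strictly positive, and assembling the three sectors yields stability for $\eta>\eta_1^*$, instability for $0<\eta<\eta_1^*$, and instability for $\eta<0$. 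The hard part will be the global analysis of $\sigma_1$: identifying $\{\eta>0:\sigma_1'\ge0\}$ with $(\eta_1^*,\infty)$ requires showing $\sigma_1$ has a unique critical point on $(0,\infty)$ (equivalently that $W$ is unimodal there). This is precisely the delicate \emph{relation between the intensity $\alpha$ and the order parameter $\eta$}, and demands a careful sign study of the integral defining $\sigma_1'(\eta)$, in contrast to the soft convexity arguments that settle the other sectors.
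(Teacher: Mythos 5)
Your reduction and sector analysis are correct and, once unwound, compute exactly the same quantities as the paper: your four $S$-invariant sectors of symmetric matrices are the matrix-side counterparts of the paper's functionals $\mathcal{I}_1,\mathcal{I}_2,\mathcal{I}_0,\mathcal{I}_3$, and your criteria agree with Propositions \ref{prop:I0}--\ref{prop:I3} (your first-block condition $\langle\sin^4\theta\rangle/\langle\sin^2\theta\rangle>(k+2)/(n+2)$ is precisely the sign condition appearing in the proof of Proposition \ref{prop:I1}, your off-diagonal computation $\alpha s_{12}=1$ is Proposition \ref{prop:I0}, and your identity $nV-2W=2\eta W'$ is Lemma \ref{lem:eta-value} in disguise). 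The organizational difference is that you project, in the weighted inner product, onto the finite-dimensional span of $\{m_im_jf_0\}$ before diagonalizing, whereas the paper keeps infinite-dimensional $\theta$-dependent coefficients and applies Cauchy--Schwarz inside each $\mathcal{I}_\gamma$; the extremizers of those Cauchy--Schwarz steps are exactly your finite-dimensional directions, so the two routes are equivalent for Theorem \ref{thm:1} (the paper's version has the advantage of also producing the coercivity constant on the orthogonal complement needed for Theorem \ref{thm:2}). Your proofs of the block criteria via monotonicity of the moment ratio $r(\eta)$ together with its value at $\eta=0$ are a valid alternative to the paper's pointwise sign argument, and your derivation of the scalar-sector identity by differentiating the self-consistency relation is a clean shortcut to Lemma \ref{lem:eta-value}.

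The one step you have named but not supplied is essential and is the only genuinely delicate point: you need $\sigma_1$ to have a unique critical point, and to tend to $+\infty$ as $\eta\to\pm\infty$, so that $\eta_1^*=\mathrm{argmin}\,\sigma_1$ is well defined and $\{\eta:\sigma_1'(\eta)\ge0\}=[\eta_1^*,\infty)$. Without this, your equivalence ``scalar stability for $\eta>0$ iff $\sigma_1'(\eta)\ge0$'' does not yet give the clean dichotomy at $\eta_1^*$ claimed in part (iii). The paper closes this in Lemma \ref{lem:single-root}: $\sigma_k'$ is a positive multiple of $A_2(A_2-A_4)-A_0(A_4-A_6)$, and the function $e^{-\eta}\bigl(A_2(A_2-A_4)-A_0(A_4-A_6)\bigr)$ has strictly positive derivative by Cauchy--Schwarz (see \eqref{def-sigprime}), so $\sigma_k'$ changes sign exactly once; the limits \eqref{lim:pinf}--\eqref{lim:minf} guarantee the minimum is attained. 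You should import or reprove this lemma to complete part (iii); everything else in your outline goes through.
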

\begin{remark}
Let $\tilde{R}\in O(n)$ be a reflection defined by $R:(m_1, m_2,\cdots, m_n)\mapsto (m_n, m_{n-1},\cdots, m_1)$. Then one has
that $h^{(n-k)}_{\eta,R}(m)=h^{(k)}_{-\eta,\tilde{R}R}(m)$. Therefore, for $k>[\frac{n}{2}]$, the only stable anisotropic critical points are $h^{(n-1)}_{\eta,R}(m)$ with $\sigma_{n-1}'(\eta)<0$, which indeed are equivalent to the equilibria $h^{(1)}_{-\eta,\tilde{R}R}(m)$.
\end{remark}

For a given $k$, there is a unique $\eta^*_k$ such that  $\sigma'_k(\eta^*_k)=0$ and  $\sigma'_k(\eta)\lessgtr0$ if and only if    $\eta\lessgtr \eta^*_k$  (see Lemmas \ref{lem:single-root}-\ref{lem3}). Moreover, when $\alpha<\sigma_k(\eta^*_k)=\min_{\eta\in\mathbb{R}} \sigma_k(\eta)$, the relation $\alpha=\sigma_k(\eta)$
can not be satisfied for any $\eta\in\mathbb{R}$, thus the critical points $h_{\eta,R}^{(k)}$ do not exist. In particular, if $\alpha<\sigma_1(\eta^*_1)$, then $h_0$ is the only critical point. We refer to Figure \ref{fig:phase1} for the graphs of the relations $\alpha=\sigma_k(\eta)$ for different $k$.

\begin{figure}
\centering \mbox{
\subfigure[$n=5$]{\includegraphics[height=7.5cm]{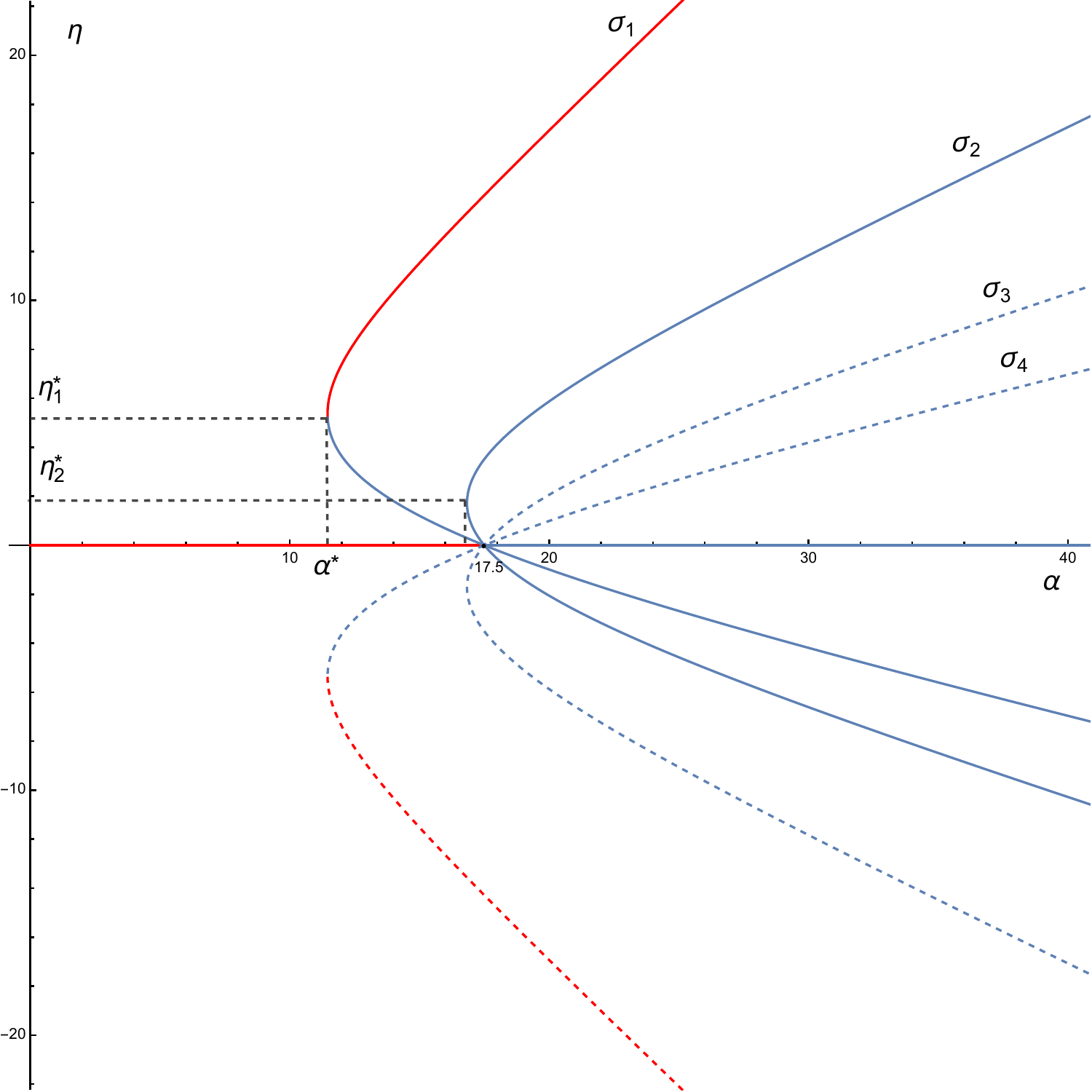}}
\quad\quad
\subfigure[$n=6$]{\includegraphics[height=7.5cm]{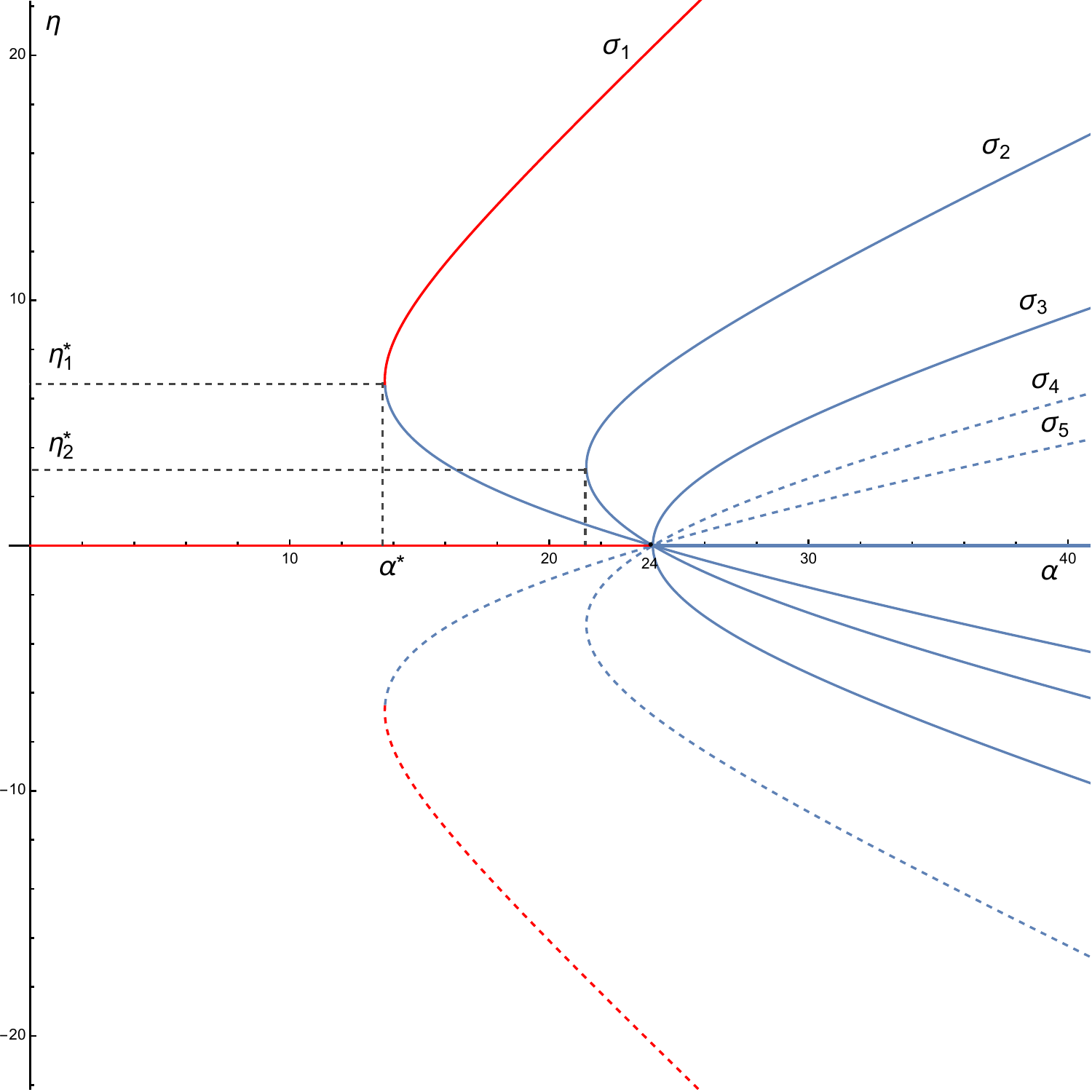}}}
\caption{The graphic characterization for $\eta$ and $\alpha$ satisfying \eqref{eqn:eta-alpha} for $n=5$ and $n=6$. The red curves corresponds to stable solutions, and blue ones correspond to unstable solutions. The dashed curves do not represent new solutions due to $h^{(n-k)}_{\eta,R}=h^{(k)}_{-\eta,\tilde{R}R}$.}
\label{fig:phase1}
\end{figure}

Giving a critical point $f_0$, let
\begin{align*}
  \mathcal{H}_{f_0}\phi=\frac{\delta^2 \mathcal{A}}{\delta f^2}\Big|_{f=f_0} \phi.
\end{align*}
It is direct to calculate that for $\phi\in V$:
\begin{align*}
\mathcal{H}_{f_0}\phi
=&\frac{\phi}{f_0}-\alpha\int_{\mathbb{S}^{n-1}}
  (m\cdot m')^2 \phi(m') \ud m'
  -\fint_{\mathbb{S}^{n-1}}\Big( \frac{\phi}{f_0}-\alpha\int_{\mathbb{S}^{n-1}}(m\cdot m')^2 \phi(m') \ud m'\Big)\ud m,
\end{align*}
where $\fint_{\mathbb{S}^{n-1}}=\frac{1}{|\mathbb{S}^{n-1}|}\int_{\mathbb{S}^{n-1}}$ represents the average of the integral on $\mathbb{S}^{n-1}$.
Thus
\begin{align}\nonumber
\langle \mathcal{H}_{f_0}\phi,\phi\rangle_{L^2(\mathbb{S}^{n-1})}
=&\int_{\mathbb{S}^{n-1}}\frac{\phi^2}{f_0}\ud m-\alpha\int_{\mathbb{S}^{n-1}\times\mathbb{S}^{n-1}}
  (m\cdot m')^2 \phi(m) \phi(m')\ud m \ud m'\\ \label{def:H}
=&\int_{\mathbb{S}^{n-1}}\frac{\phi^2}{f_0}\ud m-\alpha\Big|\int_{\mathbb{S}^{n-1}}\Big(m\otimes m-\frac1nI_n\Big) \phi(m)\ud m\Big|^2.
\end{align}
The stability of $f_0$ is equivalent to the nonnegativity of the above quadratic form for any perturbation $\phi\in V$.

Due to the rotational symmetry of the energy functional, we only need to study the stability of $h^{(k)}_{\eta}(m)$ (see \eqref{equ:critical-0}) where $R$ is the identity matrix. Thus, in the sequel, we shall assume that $f_0=h^{(k)}_{\eta}(m)$ which depends only on $\theta$ (or equivalently, $|\sum_{i=1}^k m_i^2|$).

Next, for given $\eta>\eta_1^*$ and the stable anisotropic critical point $h_\nu$, we study the kernel space of $\mathcal{H}_{h_\nu}$, which is defined by:
\begin{align*}
  \mathrm{Ker}\ \mathcal{H}_{h_\nu}:=\{\phi\in V: \mathcal{H}_{h_\nu}\phi=0\}.
\end{align*}
Let
\begin{align*}
  V_{\nu}^\top:=&\,\Big\{u \cdot \big[m-\nu(m\cdot \nu)\big](m\cdot\nu) h_\nu(m): u\in \mathbb{R}^n \Big\},\\
  V_{\nu}^\bot:=&\,\big\{\psi \in V: \langle\psi, \phi \rangle=0, \text{ for }\forall \phi\in V_{\nu}^\top\big\}.
\end{align*}
The next theorem gives a characterization on $\mathrm{Ker}\ \mathcal{H}_{h_\nu}$, which plays an important role to study the Ericksen-Leslie limit of the dynamical Doi-Onsager model for nematic liquid crystal flow; see \cite{WZZ, DFL}.
\begin{theorem}\label{thm:2}
If $\eta>\eta_1^*$, then $\mathrm{Ker}\ \mathcal{H}_{h_\nu}=  V_{\nu}^\top$.
Moreover, there exist $c_0(\eta, n)>0$ such that for $\psi\in V_{\nu}^\bot$:
\begin{align}\nonumber
  \langle\mathcal{H}_{h_\nu}\psi, \psi\rangle_{L^2(\mathbb{S}^{n-1})} \ge c_0(\eta,n) \|\psi\|^2_{L^2(\mathbb{S}^{n-1})}.
\end{align}
\end{theorem}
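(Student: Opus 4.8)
The plan is to turn the infinite‑dimensional spectral problem for $\mathcal H_{h_\nu}$ into a finite‑dimensional one on symmetric matrices, diagonalize it using the axial symmetry of $h_\nu$, and then recover the gap by a soft spectral argument. Throughout I take $\nu=e_1$ and write $\langle\cdot\rangle:=\int_{\mathbb S^{n-1}}\cdot\,h_\nu\,\ud m$ and $\mathsf Q[\phi]:=\int_{\mathbb S^{n-1}}(m\otimes m-\frac1n I_n)\phi\,\ud m$, so that \eqref{def:H} reads $\langle\mathcal H_{h_\nu}\phi,\phi\rangle=\int_{\mathbb S^{n-1}}\phi^2/h_\nu\,\ud m-\alpha|\mathsf Q[\phi]|^2$.

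\textbf{Step 1 (finite‑dimensional reduction).} Since $\mathsf Q$ has finite rank, I would split $V$ orthogonally with respect to the weighted product $\int\phi\psi/h_\nu\,\ud m$ as $V=\ker\mathsf Q\oplus W$, where $W=\{h_\nu\, B{:}(m\otimes m-\langle m\otimes m\rangle):B\in\mathrm{Sym}_0\}$ ($\mathrm{Sym}_0$ = trace‑free symmetric matrices, the centering ensuring zero mean). On $\ker\mathsf Q$ the form equals $\int\phi^2/h_\nu\,\ud m$, hence is coercive, so both the kernel and the positivity are decided on $W$. Introducing the covariance operator $(\mathcal C B)_{ij}:=\mathrm{Cov}_{h_\nu}(m_im_j,\,B{:}(m\otimes m))$ on $\mathrm{Sym}_0$, a direct computation gives $\mathsf Q[\psi]=\mathcal C B$ and $\int\psi^2/h_\nu\,\ud m=\langle B,\mathcal C B\rangle$ for $\psi\leftrightarrow B$, whence
\begin{align*}
\langle\mathcal H_{h_\nu}\psi,\psi\rangle=\langle B,\mathcal C(I-\alpha\mathcal C)B\rangle .
\end{align*}
As $\mathcal C$ is symmetric and positive definite on $\mathrm{Sym}_0$ (because $B{:}(m\otimes m)$ is non‑constant for $B\neq0$), $\ker\mathcal H_{h_\nu}$ corresponds to the $\tfrac1\alpha$‑eigenspace of $\mathcal C$, and $\mathcal H_{h_\nu}\ge0$ is equivalent to $\alpha\mathcal C\le I$.

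\textbf{Step 2 (symmetry decomposition).} The density $h_\nu$ is invariant under the stabilizer $O(n-1)$ of the axis, so $\mathcal C$ commutes with it and $\mathrm{Sym}_0$ splits into three eigenspaces: the one‑dimensional \emph{longitudinal} mode $\mathbb R(e_1\otimes e_1-\frac1nI_n)$ with eigenvalue $\mu_0$; the \emph{vector} mode $\mathrm{span}\{e_1\otimes e_j+e_j\otimes e_1:2\le j\le n\}$ with eigenvalue $\mu_1$; and the \emph{tensor} mode of trace‑free matrices on the last $n-1$ coordinates with eigenvalue $\mu_2$. Under $\psi\leftrightarrow B$ the vector mode is exactly $\mathrm{span}\{m_1m_jh_\nu:2\le j\le n\}=V_\nu^\top$. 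I would then compare each eigenvalue with $\tfrac1\alpha$. First, every point of the orbit $\{h_{R\nu}:R\in O(n)\}$ is a critical point, so the orbit's tangent space lies in $\ker\mathcal H_{h_\nu}$; differentiating $h_{R_t\nu}$ along rotations moving $\nu$ produces exactly $V_\nu^\top$, giving $\mu_1=\tfrac1\alpha$ and $V_\nu^\top\subseteq\ker\mathcal H_{h_\nu}$. Next, the exponential‑family identity $\tfrac{\ud}{\ud\eta}\langle m_1^2\rangle=\mathrm{Var}_{h_\nu}(m_1^2)$, combined with the self‑consistency $\tfrac{(n-1)\eta}{n}=\alpha(\langle m_1^2\rangle-\tfrac1n)$, identifies the inequality $\mu_0<\tfrac1\alpha$ with $\sigma_1'(\eta)>0$, i.e.\ with $\eta>\eta_1^*$.

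\textbf{Step 3 (the tensor mode and the kernel).} Conditioning on $m_1=t$ (so $(m_2,\dots,m_n)$ is uniform on a subsphere) reduces $\mu_2<\mu_1=\tfrac1\alpha$ to the one‑variable moment inequality
\begin{align*}
\int_{-1}^1(1-t^2)\big((n+2)t^2-1\big)\,e^{\eta t^2}(1-t^2)^{\frac{n-3}{2}}\,\ud t>0,\qquad \eta>\eta_1^*.
\end{align*}
Together with Theorem \ref{thm:1}(iii), which already yields $\mu_0,\mu_2\le\tfrac1\alpha$, the two strict inequalities give $\ker\mathcal H_{h_\nu}=V_\nu^\top$. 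For the gap I would argue abstractly: on $V$, $\mathcal H_{h_\nu}$ is the sum of multiplication by $1/h_\nu$, which is self‑adjoint with spectrum in an interval $[c_\ast,C_\ast]\subset(0,\infty)$ depending only on $(\eta,n)$, and a finite‑rank operator; by Weyl's theorem its essential spectrum is contained in $[c_\ast,C_\ast]$, hence bounded away from $0$. Since $\mathcal H_{h_\nu}\ge0$ and $0$ is an eigenvalue with eigenspace $V_\nu^\top$, the value $0$ is isolated in the spectrum, so $c_0(\eta,n):=\inf(\mathrm{spec}\,\mathcal H_{h_\nu}\setminus\{0\})>0$; using $V_\nu^\bot=(\ker\mathcal H_{h_\nu})^{\perp}$ gives $\langle\mathcal H_{h_\nu}\psi,\psi\rangle\ge c_0(\eta,n)\|\psi\|_{L^2}^2$ on $V_\nu^\bot$.

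\textbf{Expected obstacle.} Steps 1, 2 and the abstract gap are essentially routine; the crux is the strict tensor‑mode stability, i.e.\ proving the displayed moment inequality for \emph{all} $\eta>\eta_1^*$. At $\eta=0$ both sides vanish and the estimate degenerates, while for large $\eta$ the integrand changes sign, so it cannot be established termwise. I expect this to require the integration‑by‑parts recursions and monotonicity for the weighted moments $\int t^{2j}e^{\eta t^2}(1-t^2)^{(n-3)/2}\,\ud t$ that also underlie the study of $\sigma_k$ (Lemmas \ref{lem:single-root}--\ref{lem3}), and this is where the main difficulty lies.
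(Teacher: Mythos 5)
Your proposal is essentially correct and takes a genuinely different route from the paper. The paper works directly with an explicit $L^2$-orthogonal decomposition of the perturbation (the basis $U$ of Section \ref{sec:decomp}), diagonalizes the quadratic form into the scalar functionals $\mathcal{I}_0,\mathcal{I}_2,\mathcal{I}_3$ (Lemma \ref{lem:decomp} with $k=1$), identifies the equality case of $\mathcal{I}_0$ (the Cauchy--Schwarz extremizer $a=Ce^{\eta\sin^2\theta}\sin\theta\cos\theta$) with $V_\nu^\top$, and gets the gap from the strict coercivity of each $\mathcal{I}_\gamma$ on the relevant subspaces. Your reduction to the covariance operator $\mathcal C$ on $\mathrm{Sym}_0$, the Schur-lemma splitting into longitudinal/vector/tensor modes under $O(n-1)$, and the identification $\mu_1=1/\alpha$ from differentiating the critical orbit are all sound, and the last point is arguably cleaner than the paper's equality-case analysis of $\mathcal{I}_0$. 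Your identification of $\mu_0<1/\alpha$ with $\sigma_1'(\eta)>0$ via $\frac{\ud}{\ud\eta}\langle m_1^2\rangle=\mathrm{Var}(m_1^2)$ reproduces exactly the content of Proposition \ref{prop:I3} together with Lemma \ref{lem:eta-value}. The abstract Weyl argument for the gap (multiplication by $1/h_\nu$ plus finite rank, so the essential spectrum stays in $[\min 1/h_\nu,\max 1/h_\nu]$ and $0$ is an isolated eigenvalue) is a legitimate, and in fact more systematic, substitute for the paper's coercivity statements.

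The one genuine gap is the step you yourself flag: the strict tensor-mode inequality $\mu_2<1/\alpha$, equivalently your displayed moment inequality. As written, your proof of the kernel identification is incomplete without it, since Theorem \ref{thm:1}(iii) only gives $\mu_2\le 1/\alpha$. However, you substantially overestimate its difficulty: it is precisely Proposition \ref{prop:I2} of the paper (the $\mathcal{I}_2$ functional for $k=1$), and it is settled by a two-line trick rather than by moment recursions. The function $\cos^2\theta\bigl[(n+2)\sin^2\theta-1\bigr]$ integrates to zero against $\ud\mu_1^{\theta}$, so one may replace $e^{\eta\sin^2\theta}$ by $e^{\eta\sin^2\theta}-e^{\eta/(n+2)}$ in your integral; the resulting integrand is pointwise the product of two factors that change sign at the same point $\sin^2\theta=\tfrac{1}{n+2}$, hence has the sign of $\eta$. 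Strict positivity then follows for all $\eta>\eta_1^*$ because $\eta_1^*>0$ (Lemma \ref{lem3}), which is also the fact you need to make your $\mu_0$ computation valid (you divide by $\eta$ and use $\langle m_1^2\rangle-\tfrac1n>0$). With this inserted, your argument closes.
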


The paper will be organized as follows. In Section \ref{sec:decomp}, we derive a key decomposition formula for the quadratic form \eqref{def:H}; see Proposition \ref{prop:decomp}.
Section \ref{sec:nonnegativity} is devoted to study the nonnegativity of each term arising from the decomposition of \eqref{def:H}. A main step is the analysis on the behavior of $\sigma_k(\eta)$.
Then the proof of Theorems 1.1 and 1.2 will be completed in Section \ref{sec:proof}.

{\bf Notations}: We use $S_d$ to denote the surface area of $\mathbb{S}^{d-1}$. We will denote by $\langle\cdot,\,\cdot \rangle_{\mathcal{M}}$
the $L^2(\mathcal{M})$-inner production for a smooth manifold $\mathcal{M}$. Four typical cases will be frequently used: $\mathcal{M}=\mathbb{S}^{n-1}$, $\mathbb{S}^{k-1}$, $\mathbb{S}^{n-k-1}$, and $\mathbb{S}^{k-1}\times \mathbb{S}^{n-k-1}$ with standard metrics on them.
Furthermore, $\langle\cdot,\,\cdot\rangle_{\mathbb{S}^{n-1}}$ will be simplified as $\langle\cdot,\,\cdot \rangle$.

\section{Decomposition of the second variation}\label{sec:decomp}

For $m\in\mathbb{S}^{n-1}$, we rewrite it as $m=(\sin\theta\omega, \cos\theta\xi)$ with $\omega=(\omega_i)_{1\le i\le k}\in\mathbb{S}^{k-1}$, $ \xi=(\xi_j)_{1\le i\le n-k}\in\mathbb{S}^{n-k-1}$ and $\theta\in[0,\frac\pi2]$.  Note again that the critical points $f_0$ we considered is just a function of $\theta$.

Let $U\subset V$ be the set of all following functions defined on $\mathbb{S}^{n-1}$ (which is independent of the variable $\theta$):
\begin{align}\nonumber
\left\{
\begin{array}{ll}
\Omega_{ii'}(\omega, \xi)=\omega_i\omega_{i'},\quad &\text{ for }1\le i < i'\le k;\\
 \Omega_{0i}(\omega, \xi)=\frac{i\omega_{i+1}^2-\sum_{i'=1}^{i} \omega_{i'}^2}{\sqrt{2i(i+1)}},\quad &\text{ for }1\le i\le k-1;\\
\Xi_{jj'}(\omega, \xi)=\xi_j\xi_{j'},\quad &\text{ for }1\le j < j'\le n-k;\\
 \Xi_{0j}(\omega, \xi)=\frac{j\xi_{j+1}^2-\sum_{j'=1}^{j} \xi_{j'}^2}{\sqrt{2j(j+1)}},\quad &\text{ for }1\le j\le n-k-1;\\
\Theta_{ij}(\omega, \xi)=\omega_i\xi_j,\quad &\text{ for }1\le i\le k, 1\le j\le n-k.
\end{array}\right.
\end{align}
For the simplicity of presentation, we introduce the sets of subscripts for $\Omega$, $\Xi$ and $\Theta$:
\begin{align*}
\Lambda_1=&\,\{0i: 1\le i\le k-1\}\cup \{ii': 1\le i< i'\le k\}\triangleq \Lambda_1^A \cup \Lambda_1^B,\\
\Lambda_2=&\,\{0j: 1\le i\le n-k-1\}\cup \{jj': 1\le j<j'\le n-k\}\triangleq \Lambda_2^A \cup \Lambda_2^B,\\
\Lambda_0= &\,\{ij: 1\le i\le k, 1\le j\le n-k\},
\end{align*}
and the Kronecker symbol $\delta_I^J$ for $I$, $J$ in the same set $\Lambda_i$ is defined by: $\delta_I^J=1$ if $I=J$; $\delta_I^J=0$ if $I\neq J$.
Then we may rewrite the set $U$ as
\begin{align*}
  U=\{\Omega_I(\omega):\ {I\in\Lambda_1} \}\cup \{\Xi_J(\xi):\ {J\in\Lambda_2} \} \cup\{\Theta_K(\omega, \xi):\ {K\in\Lambda_0}\}.
\end{align*}
Note that all these functions depend only on $(\omega, \xi)$. For any $p(\omega,\xi)\in U$, we have
\begin{align*}
\int_{\mathbb{S}^{k-1}\times\mathbb{S}^{n-k-1}}p(\omega,\xi)\ud \omega\ud \xi=0.
\end{align*}
\begin{remark}
  When $k=1$, we have $m=(\sin\theta, \cos\theta\xi)$ with $\xi\in\mathbb{S}^{n-2}$. Thus $\Lambda_1=\emptyset$ and $U$ contains only $\Xi_{(\cdot)}$ and $\Theta_{1j}(=\xi_j)$. Similarly, $\Lambda_2=\emptyset$ if $k=n-1$.
\end{remark}

 We introduce two subspaces of $V$:
\begin{align}\nonumber
{V}^{\top}=&~\mathrm{span}\Big\{a(\theta) p(\omega, \xi),\,b(\theta):\   p(\omega, \xi)\in U;\, a,\, b\in L^2\Big(\big[0,\frac\pi2\big],\ud\mu_k^{\theta}\Big),\, \int_{0}^{\pi/2} b(\theta) \ud\mu_k^{\theta} =0\Big\},
\\
\nonumber {V}^{\bot}=&~\Big\{\psi(\theta, \omega, \xi):\  \psi\in V, \, \langle \psi,\phi\rangle_{L^2} =0, \forall \phi\in V^\top\Big\}.
\end{align}
An important fact is that ${V}^{\top}$ contains all components of $m\otimes m-\frac{1}{n}I_n$.
\begin{lemma}\label{lem:ortho}
All the components of $(m\otimes m-\frac1nI_n)$ belongs to ${V}^{\top}$. Thus, for all $\psi(m)\in {V}^{\bot}$,
\begin{align*}
  \int_{\mathbb{S}^{n-1}}\Big(m\otimes m-\frac{1}{n}I_n\Big)\psi(m)\ud m=0.
\end{align*}
\end{lemma}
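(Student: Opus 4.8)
The plan is to write each entry of $m\otimes m-\frac1nI_n$ explicitly in the coordinates $m=(\sin\theta\,\omega,\cos\theta\,\xi)$ and to check directly that it lies in $V^\top$; once this is done, the integral identity is immediate, since any $\psi\in V^\bot$ is orthogonal to every element of $V^\top$. The off-diagonal and cross entries are the easy case. For $1\le i<i'\le k$ one has $(m\otimes m)_{ii'}=\sin^2\theta\,\omega_i\omega_{i'}=\sin^2\theta\,\Omega_{ii'}$; for $1\le j<j'\le n-k$, $(m\otimes m)_{(k+j)(k+j')}=\cos^2\theta\,\Xi_{jj'}$; and for the cross block $(m\otimes m)_{i(k+j)}=\sin\theta\cos\theta\,\Theta_{ij}$. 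Since $I_n$ is diagonal these entries are unchanged by subtracting $\frac1nI_n$, and each is visibly of the form $a(\theta)p(\omega,\xi)$ with $p\in U$, hence in $V^\top$.

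The diagonal entries are the crux. Consider the $\omega$-block: $(m\otimes m-\frac1nI_n)_{ii}=\sin^2\theta\,\omega_i^2-\frac1n$ for $1\le i\le k$. The key algebraic fact I would establish is that $\omega_i^2-\frac1k$ lies in $\mathrm{span}\{\Omega_{0i'}:1\le i'\le k-1\}$. I would prove this by a dimension count. The functions $\omega_1^2,\dots,\omega_k^2$ are linearly independent on $\mathbb{S}^{k-1}$ (evaluate a vanishing combination at the standard basis vectors), so they span a $k$-dimensional space; this space contains the constant $\sum_{i}\omega_i^2=1$, so its mean-zero part has dimension $k-1$. On the other hand the $k-1$ functions $\Omega_{0i'}$ are mean-zero (each integrates to $0$ over $\mathbb{S}^{k-1}$ because $\int_{\mathbb{S}^{k-1}}\omega_{i'}^2\,\ud\omega=\frac1k S_k$ for every $i'$) and are linearly independent, since $\omega_{i'+1}^2$ first appears in $\Omega_{0i'}$, giving a triangular coefficient pattern. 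Hence $\{\Omega_{0i'}\}$ is a basis of the mean-zero part, and in particular
\begin{align*}
\sin^2\theta\,\omega_i^2-\frac1n=\Big(\frac{\sin^2\theta}{k}-\frac1n\Big)+\sin^2\theta\sum_{i'=1}^{k-1}c_{ii'}\Omega_{0i'}
\end{align*}
for suitable constants $c_{ii'}$, where the sum is a finite combination of functions $a(\theta)p(\omega,\xi)$ with $a(\theta)=\sin^2\theta$ and $p\in U$.

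It then remains to verify that the purely angular remainder $b(\theta):=\frac{\sin^2\theta}{k}-\frac1n$ lies in the $\theta$-only part of $V^\top$, i.e. that $\int_0^{\pi/2}b(\theta)\,\ud\mu_k^{\theta}=0$. Rather than evaluating Beta-type integrals, I would deduce this from the symmetry identity $\int_{\mathbb{S}^{n-1}}(m_i^2-\frac1n)\,\ud m=0$ (which holds because $\int_{\mathbb{S}^{n-1}}m_i^2\,\ud m=\frac1n S_n$): integrating the displayed decomposition against $\ud m=\ud\mu_k^{\theta}\ud\omega\ud\xi$ annihilates the $\Omega_{0i'}$ terms (they are mean-zero in $\omega$) and leaves $S_kS_{n-k}\int_0^{\pi/2}b(\theta)\,\ud\mu_k^{\theta}=0$. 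The $\xi$-block diagonal entries $\cos^2\theta\,\xi_j^2-\frac1n$ are treated identically, using $\Xi_{0j'}$ and $b(\theta)=\frac{\cos^2\theta}{n-k}-\frac1n$. This exhausts all entries, so every component of $m\otimes m-\frac1nI_n$ belongs to $V^\top$; the final claim follows at once, since for $\psi\in V^\bot$ orthogonality to each such component gives $\langle(m\otimes m-\frac1nI_n)_{pq},\psi\rangle=0$, that is $\int_{\mathbb{S}^{n-1}}(m\otimes m-\frac1nI_n)\psi\,\ud m=0$.

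The only non-routine step is the basis claim for $\{\Omega_{0i'}\}$; the rest is bookkeeping. I would also record the degenerate cases separately: when $k=1$ (and symmetrically $k=n-1$) the set $\Lambda_1$ is empty, but then $\omega\in\mathbb{S}^0$ forces $\omega_1^2\equiv1$, so the diagonal entry collapses to $b(\theta)=\sin^2\theta-\frac1n$ with no $\Omega_{0i'}$ terms, and the same zero-integral check applies verbatim.
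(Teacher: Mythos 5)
Your proof is correct and follows essentially the same route as the paper's: off-diagonal and cross entries are immediate, and the diagonal entries are handled by writing $\omega_i^2-\frac1k$ as a combination of the $\Omega_{0i'}$ and checking that the leftover $\theta$-only piece has zero $\ud\mu_k^{\theta}$-mean. The only differences are cosmetic: the paper exhibits the coefficients explicitly via the telescoping identities $\omega_k^2-\frac1k=\frac{\sqrt{2k(k-1)}}{k}\Omega_{0,k-1}$ and $\omega_{i+1}^2-\omega_{i}^2=\frac{\sqrt{2i(i+1)}}{i}\Omega_{0,i}-\frac{\sqrt{2i(i-1)}}{i}\Omega_{0,i-1}$ where you use a dimension count, and it simply invokes $\sin^2\theta-\frac kn\in V^{\top}$ where you derive the zero-mean property from $\int_{\mathbb{S}^{n-1}}\big(m_i^2-\frac1n\big)\,\ud m=0$.
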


\begin{proof}For the off-diagonal components $m_im_{j}(i\neq j)$, we get by direct calculations that
  \begin{align*}
m_im_{i'}&=\sin^2\theta \omega_i\omega_{i'} \in {V}^{\top},\quad \text{ for }1\le i < i'\le k;\\
m_jm_{j'}&=\cos^2\theta \xi_i\xi_{i'} \in {V}^{\top},\quad\text{ for }1\le j < j'\le n-k;\\
m_im_j&=\sin\theta\cos\theta \omega_i\xi_j\in {V}^{\top},\quad \text{ for }1\le i\le k, 1\le j\le n-k.
\end{align*}
For the diagonal components $m_k^2-\frac{1}{k}$, one has
\begin{align*}
\omega_k^2-\frac{1}{k}&= \omega_k^2-\frac{1}{k}\sum_{i=1}^{k}\omega_i^2 = \frac{\sqrt{2k(k-1)}}{k}\Omega_{0,k-1}\in {V}^{\top},\\
\omega_{i+1}^2-\omega_{i}^2&=\frac{\sqrt{2i(i+1)}}{i}\Omega_{0,i}-\frac{\sqrt{2i(i-1)}}{i}\Omega_{0,i-1}\in {V}^{\top},\quad \text{for }i=1,2,\cdots, k-1,
\end{align*}
which implies $\omega_{i}^2-\frac1k \in {V}^{\top} $ for $i=1,2,\cdots, k$.
Thus by the fact that $\sin^2\theta-\frac{k}{n}\in{V}^{\top} $, one yields
\begin{align*}
  m_i^2-\frac{1}{n}=  \sin^2\theta\Big(\omega_i^2-\frac{1}{k}\Big)+\frac{1}{k}\sin^2\theta-\frac{1}{n}\in {V}^{\top},\quad \text{for }i=1,2,\cdots, k.
\end{align*}
Similarly, we can obtain that \begin{align*}
  m_{k+j}^2-\frac{1}{n}=  \cos^2\theta\Big(\xi_j^2-\frac{1}{n-k}\Big)+\frac{1}{n-k}\cos^2\theta-\frac{1}{n}\in {V}^{\top},\quad \text{for }j=1,2,\cdots, n-k,
\end{align*}
which concludes the lemma.
\end{proof}

For any $\phi(m)\in V$, we apply the decomposition:
\begin{align*}
  \phi(m):=\phi^{\top}(m)+\phi^{\bot}(m),
\end{align*}
where $\phi^{\top}\in {V}^\top$, and $\phi^{\bot} \in {V}^{\bot}$.
As $f_0(m)$ depends only on $\theta$, one get from  $\phi^\top\in {V}^{\top}$ that
\begin{align*}
  \frac{\phi}{f_0}-\fint_{\mathbb{S}^{n-1}}\frac{\phi}{f_0}\ud m \in {V}^{\top}.
\end{align*}
Thus we obtain that
$ \int_{\mathbb{S}^{n-1}}\frac{\phi^{\top}\phi^{\bot}}{f_0} dm=0,$
which implies
\begin{align*}
  \int_{\mathbb{S}^{n-1}}\frac{\phi^2}{f_0} \ud m
  = \int_{\mathbb{S}^{n-1}}\frac{(\phi^{\top})^2}{f_0} \ud m+\int_{\mathbb{S}^{n-1}}\frac{(\phi^{\bot})^2}{f_0} \ud m.
\end{align*}
Therefore, we obtain from Lemma \ref{lem:ortho} that
\begin{align}\label{decomp:H-0}
 \langle \mathcal{H}_{f_0} \phi,\phi\rangle= \langle \mathcal{H}_{f_0} \phi^\top,\phi^\top\rangle
 +\int_{\mathbb{S}^{n-1}}\frac{(\phi^{\bot})^2}{f_0} dm\ge \langle \mathcal{H}_{f_0} \phi^\top,\phi^\top\rangle.
\end{align}

Next, we will give further decomposition for $\langle\mathcal{H}_{f_0} \phi^\top,\phi^\top\rangle$.
\begin{lemma}\label{lem:integral}
For $d\ge 2$, and $\varpi=(\varpi_i)_{1\le i\le d}\in \mathbb{S}^{d-1}$, one has
  \begin{align}\label{eq:integral}
\int_{\mathbb{S}^{d-1}} \varpi_i^2\varpi_j^2 \ud \varpi =\frac{S_d}{d(d+2)}\text{ for }i\neq j,\quad  \int_{\mathbb{S}^{d-1}} \varpi_i^4\ud \varpi =\frac{3S_d}{d(d+2)}.
\end{align}
\end{lemma}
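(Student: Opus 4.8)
The plan is to reduce everything to two symmetry classes of integrals and then pin them down with two linear relations. First I would observe that the surface measure $\ud\varpi$ on $\mathbb{S}^{d-1}$ is invariant under permutations and sign changes of the coordinates. Consequently the quantity $A:=\int_{\mathbb{S}^{d-1}}\varpi_i^4\,\ud\varpi$ is independent of $i$, and $B:=\int_{\mathbb{S}^{d-1}}\varpi_i^2\varpi_j^2\,\ud\varpi$ (for $i\neq j$) is independent of the pair $(i,j)$. So the lemma amounts to showing $B=\frac{S_d}{d(d+2)}$ and $A=3B$.

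One relation comes for free from the constraint $\sum_{i=1}^d \varpi_i^2=1$. Integrating the identity $\big(\sum_i\varpi_i^2\big)^2=\sum_i\varpi_i^4+\sum_{i\neq j}\varpi_i^2\varpi_j^2$ over $\mathbb{S}^{d-1}$ gives $S_d = dA + d(d-1)B$, which is one equation in the two unknowns $A,B$.

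The crux is the second relation $A=3B$. The cleanest route I would take is the divergence theorem on the unit ball $\mathbb{B}^d=\{|x|\le 1\}$, applied to the vector fields $x_i^3 e_i$ and $x_i^2 x_j e_j$ (with $j\neq i$), where $e_i$ denotes the $i$-th standard basis vector. Since the outward unit normal on $\mathbb{S}^{d-1}$ is $\varpi$ itself, the first choice yields $3\int_{\mathbb{B}^d}x_i^2\,\ud x = \int_{\mathbb{S}^{d-1}}\varpi_i^4\,\ud\varpi = A$, while the second yields $\int_{\mathbb{B}^d}x_i^2\,\ud x = \int_{\mathbb{S}^{d-1}}\varpi_i^2\varpi_j^2\,\ud\varpi = B$. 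Dividing gives $A=3B$ with no need to evaluate the ball integral at all. (Equivalently, one may represent the uniform law on the sphere as $X/|X|$ for a standard Gaussian vector $X$ and use $\mathbb{E}[X_i^4]=3$, $\mathbb{E}[X_i^2X_j^2]=1$, which produces the same ratio.) Substituting $A=3B$ into $S_d=dA+d(d-1)B$ gives $S_d = d(d+2)B$, hence $B=\frac{S_d}{d(d+2)}$ and $A=\frac{3S_d}{d(d+2)}$, as claimed.

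I would expect no real obstacle here, since this is a standard moment computation; the only thing to get right is producing the extra relation beyond normalization. The divergence-theorem identity supplies it in essentially one line while neatly sidestepping the explicit Beta-function integrals one would otherwise face from slicing $\mathbb{S}^{d-1}$ as $\mathbb{S}^{d-2}$ fibered over $[-1,1]$ via $\varpi=(t,\sqrt{1-t^2}\,\zeta)$.
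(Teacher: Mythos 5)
Your proof is correct, and the overall skeleton matches the paper's: both reduce the lemma to two symmetry classes $A=\int\varpi_i^4$ and $B=\int\varpi_i^2\varpi_j^2$, both use the normalization identity coming from $\sum_i\varpi_i^2=1$ (your $S_d=dA+d(d-1)B$ is the paper's $A+(d-1)B=S_d/d$ multiplied by $d$), and both therefore need exactly one more relation, namely $A=3B$. The difference is in how that relation is produced. The paper stays on the sphere and uses invariance of the surface measure under the $45^\circ$ rotation $\tilde\varpi_1=(\varpi_1+\varpi_2)/\sqrt2$, $\tilde\varpi_2=(\varpi_1-\varpi_2)/\sqrt2$, which gives $B=\tfrac12(A-B)$ directly. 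You instead pass to the unit ball and apply the divergence theorem to $x_i^3e_i$ and $x_i^2x_je_j$, obtaining $A=3\int_{\mathbb{B}^d}x_i^2\,\ud x$ and $B=\int_{\mathbb{B}^d}x_i^2\,\ud x$; the ratio gives $A=3B$ without evaluating anything. Both are one-line arguments of comparable difficulty. The rotation argument is marginally more self-contained (it never leaves the sphere), while your divergence-theorem device is exactly the technique the paper itself deploys in its Appendix (J.~Ball's method for classifying the eigenvalue structure of $M$), so it fits naturally into the paper's toolkit; the Gaussian-moment remark you add in passing is an equally standard third route. No gaps.
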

\begin{proof}This is elementary.
First we have
\begin{align*}
\int_{\mathbb{S}^{d-1}} \varpi_i^2 \ud \varpi =\frac{1}{d}\int_{\mathbb{S}^{d-1}} \ud \varpi =\frac{S_d}{d}.
\end{align*}
Let $\tilde\varpi_1=(\varpi_1+\varpi_2)/\sqrt{2},\, \tilde\varpi_2=(\varpi_1-\varpi_2)/\sqrt{2}$. Then from the facts that
\begin{align*}
\int_{\mathbb{S}^{d-1}} \varpi_1^2\varpi_2^2 \ud \varpi=\int_{\mathbb{S}^{d-1}} \tilde\varpi_1^2\tilde\varpi_2^2 \ud \varpi = \frac12 \int_{\mathbb{S}^{d-1}} (\varpi_1^4-\varpi_1^2\varpi_2^2) \ud \varpi,\\
(d-1)\int_{\mathbb{S}^{d-1}} \varpi_1^2\varpi_2^2 \ud \varpi + \int_{\mathbb{S}^{d-1}} \varpi_1^4\ud \varpi =\int_{\mathbb{S}^{d-1}} \varpi_1^2 \ud \varpi =\frac{S_d}{d},
\end{align*}
we obtain \eqref{eq:integral}.
\end{proof}

Applying \eqref{eq:integral} with $d=k$ and $n-k$, we get the following proposition.
\begin{proposition}\label{prop:orth}
Elements in $U$ are pairwisely orthogonal under the inner product $\langle\,, \,\rangle_{\mathbb{S}^{k-1}\times\mathbb{S}^{n-k-1}}$. Indeed,
\begin{align}\label{eq:omega}
\int_{\mathbb{S}^{k-1}}\Omega_{I}(\omega)\Omega_{J}(\omega)\ud \omega=&\,\delta_{I}^J\frac{S_k}{k(k+2)},\quad \text{ for }I, J\in \Lambda_1;\\ \label{eq:xi}
\int_{\mathbb{S}^{n-k-1}}\Xi_{I}(\xi)\Xi_{J}(\xi)\ud\xi=&\,\delta_{I}^J\frac{S_{n-k}}{(n-k)(n-k+2)},\quad \text{ for }I, J\in \Lambda_2;\\\label{eq:theta}
\int_{\mathbb{S}^{k-1}\times\mathbb{S}^{n-k-1}}\Theta_{I}(\omega,\xi)\Theta_{J}(\omega,\xi)\ud\omega\ud\xi=&\,\delta_{I}^J\frac{S_{k}S_{n-k}}{k(n-k)},\quad \text{ for }I, J\in \Lambda_0,
\end{align}
and  it holds for $I\in \Lambda_1$, $J\in \Lambda_2$, $K\in \Lambda_0$ that
\begin{align}
\label{eq:cross}
\int_{\mathbb{S}^{k-1}\times\mathbb{S}^{n-k-1}}\Big\{\Omega_{I}(\omega)\Theta_{K}(\omega,\xi),\  \Xi_{J}(\xi)\Theta_{K}(\omega,\xi),\ \Omega_{I}(\omega)\Xi_{J}(\xi)\Big\}\ud\omega\ud\xi=&\,0.%,\quad \text{ for }
%\int_{\mathbb{S}^{k-1}\times\mathbb{S}^{n-k-1}}\Omega_{I}(\omega)\Theta_{K}(\omega,\xi)\ud\omega\ud\xi=&\,0,\quad \text{ for }I\in \Lambda_1,\  K\in \Lambda_0;\\
%\int_{\mathbb{S}^{k-1}\times\mathbb{S}^{n-k-1}}\Xi_{J}(\xi)\Theta_{K}(\omega,\xi)\ud\omega\ud\xi=&\,0,\quad \text{ for }J\in \Lambda_2,\  K\in \Lambda_0;\\
%\int_{\mathbb{S}^{k-1}\times\mathbb{S}^{n-k-1}}\Omega_{I}(\omega)\Xi_{J}(\xi)\ud\omega\ud\xi=&\,0,\quad \text{ for }I\in \Lambda_1,\  J\in \Lambda_2.
\end{align}
\end{proposition}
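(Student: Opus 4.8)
The plan is to reduce every integral in \eqref{eq:omega}--\eqref{eq:cross} to the second- and fourth-order moments of Lemma \ref{lem:integral}, exploiting the product structure of $\mathbb{S}^{k-1}\times\mathbb{S}^{n-k-1}$ together with the coordinate reflection symmetries $\omega_i\mapsto-\omega_i$ and $\xi_j\mapsto-\xi_j$. I would first dispatch the two easy blocks. For the cross terms \eqref{eq:cross}, each integral factors over the two spheres: $\int\Omega_I\Theta_K$ carries the factor $\int_{\mathbb{S}^{n-k-1}}\xi_j\,\ud\xi=0$, $\int\Xi_J\Theta_K$ carries $\int_{\mathbb{S}^{k-1}}\omega_i\,\ud\omega=0$, and $\int\Omega_I\Xi_J$ factors as $\big(\int_{\mathbb{S}^{k-1}}\Omega_I\,\ud\omega\big)\big(\int_{\mathbb{S}^{n-k-1}}\Xi_J\,\ud\xi\big)$ with both factors vanishing, since every $\Omega_I$ and $\Xi_J$ has zero mean. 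For the $\Theta$-block \eqref{eq:theta} the same factorization gives $\int\Theta_{ij}\Theta_{i'j'}=\big(\int_{\mathbb{S}^{k-1}}\omega_i\omega_{i'}\ud\omega\big)\big(\int_{\mathbb{S}^{n-k-1}}\xi_j\xi_{j'}\ud\xi\big)$, and since $\int_{\mathbb{S}^{d-1}}\varpi_a\varpi_b\,\ud\varpi=\delta_{ab}S_d/d$ this equals $\delta_I^J S_kS_{n-k}/(k(n-k))$, as claimed.

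The substance lies in the $\Omega$-block \eqref{eq:omega}; the $\Xi$-block \eqref{eq:xi} is identical after replacing $k$ by $n-k$. I would split $\Lambda_1=\Lambda_1^A\cup\Lambda_1^B$. For two off-diagonal elements $\Omega_{ii'},\Omega_{jj'}\in\Lambda_1^B$, as well as for any pairing of $\Omega_{0i}\in\Lambda_1^A$ with $\Omega_{jj'}\in\Lambda_1^B$, the integrand contains some coordinate raised to an odd power whenever the indices do not coincide, so these integrals vanish by reflection symmetry; and $\int_{\mathbb{S}^{k-1}}\Omega_{ii'}^2\,\ud\omega=\int\omega_i^2\omega_{i'}^2\,\ud\omega=S_k/(k(k+2))$ by Lemma \ref{lem:integral}. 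This leaves the only genuinely nontrivial case: two diagonal elements $\Omega_{0i},\Omega_{0j}$.

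Writing $\Omega_{0i}=\sum_{l=1}^k v^{(i)}_l\,\omega_l^2$ with coefficient vector $v^{(i)}$ (whose entries, up to the factor $1/\sqrt{2i(i+1)}$, equal $-1$ for $l\le i$ and $i$ for $l=i+1$, and $0$ otherwise), and using the consequence $\int_{\mathbb{S}^{k-1}}\omega_l^2\omega_{l'}^2\,\ud\omega=\frac{S_k}{k(k+2)}(1+2\delta_{ll'})$ of Lemma \ref{lem:integral}, I get
\[
\int_{\mathbb{S}^{k-1}}\Omega_{0i}\Omega_{0j}\,\ud\omega=\frac{S_k}{k(k+2)}\Big[\big(\textstyle\sum_l v^{(i)}_l\big)\big(\sum_l v^{(j)}_l\big)+2\langle v^{(i)},v^{(j)}\rangle\Big].
\]
The key point—and the only place where the normalization $\sqrt{2i(i+1)}$ is used—is that each $v^{(i)}$ has zero coordinate sum, which annihilates the first bracket, while $\langle v^{(i)},v^{(j)}\rangle=\tfrac12\delta_{ij}$. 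The latter is a one-line computation: the squared norm is $\tfrac{1}{2i(i+1)}(i+i^2)=\tfrac12$, and for $i<j$ the support $\{1,\dots,i+1\}$ of $v^{(i)}$ lies entirely in the region where $v^{(j)}$ is constant equal to $-1/\sqrt{2j(j+1)}$, giving overlap $\tfrac{1}{\sqrt{2i(i+1)}\sqrt{2j(j+1)}}(i-i)=0$. Hence $\int\Omega_{0i}\Omega_{0j}\,\ud\omega=\delta_{ij}S_k/(k(k+2))$, matching the off-diagonal normalization.

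I expect the only real obstacle to be the bookkeeping in this diagonal case: one must verify simultaneously that the coefficient vectors are mean-zero and mutually orthogonal with common squared norm $1/2$, which is exactly what forces the diagonal and off-diagonal generators to share the identical $L^2$-normalization $S_k/(k(k+2))$. Everything else reduces to coordinate reflection symmetry and the moment formulas of Lemma \ref{lem:integral}.
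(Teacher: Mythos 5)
Your proof is correct and follows the same route as the paper, which simply invokes the moment formulas of Lemma \ref{lem:integral} (applied with $d=k$ and $d=n-k$) together with the product structure and reflection symmetries; you have merely written out in full the diagonal-generator bookkeeping (zero coordinate sum and mutual orthogonality of the vectors $v^{(i)}$ with $\|v^{(i)}\|^2=\tfrac12$) that the paper leaves implicit. All of your computations check out.
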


Define
\begin{align}\nonumber
W_{i}(\varphi)=&\,\int_{\mathbb{S}^{k-1}}\left(\omega_i^2-\frac{1}{k}\right)\varphi(\omega) \ud \omega,\quad \text{ for $1\le i\le k$, $\varphi\in L^2(\mathbb{S}^{k-1})$},\\
\nonumber X_{j}(\psi)=&\,\int_{\mathbb{S}^{n-k-1}}\left(\xi_j^2-\frac{1}{n-k}\right)\psi(\xi) \ud \xi,\quad \text{ for $1\le j\le n-k$, $\psi\in L^2(\mathbb{S}^{n-k-1})$}.
\end{align}

\begin{proposition}\label{prop:decomp}
For $I\in \Lambda_1^B$, $J\in \Lambda_2^B$, there holds
\begin{align}\label{eq:WX-B}
W_{i}(\Omega_{I})=0,\ \text{for }I\in\Lambda_1^B;  \quad  X_{j}(\Xi_{J})=0,\ \text{for }J\in\Lambda_2^B.
\end{align}
Moreover, for  $I, I'\in \Lambda_1^A$, $J, J'\in \Lambda_2^A$,
\begin{align}\label{eq:W}
\sum_{i=1}^k W_{i}(\Omega_{I})=0,\quad& \sum_{i=1}^k W_{i}(\Omega_{I})W_{i}(\Omega_{I'})=\frac{2S_k^2}{k^2(k+2)^2}\delta_I^{I'};\\
\sum_{j=1}^{n-k} X_{j}(\Xi_{J})=0,\quad& \sum_{j=1}^{n-k} X_{j}(\Xi_{I})X_{j}(\Xi_{J'})=\frac{2S_{n-k}^2}{(n-k)^2(n-k+2)^2}\delta_J^{J'}.\label{eq:X}
\end{align}
\end{proposition}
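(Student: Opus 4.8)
The plan is to reduce everything to the single second-moment computation
$A_{ii'}:=\int_{\mathbb{S}^{k-1}}\big(\omega_i^2-\tfrac1k\big)\omega_{i'}^2\,\ud\omega$,
which Lemma \ref{lem:integral} evaluates in closed form. Writing $v_i(\omega):=\omega_i^2-\tfrac1k$, one has $W_i(\varphi)=\langle v_i,\varphi\rangle_{\mathbb{S}^{k-1}}$, and since $\int_{\mathbb{S}^{k-1}}v_i\,\ud\omega=0$ the matrix $A_{ii'}$ is exactly the Gram matrix $\langle v_i,v_{i'}\rangle_{\mathbb{S}^{k-1}}$. Applying \eqref{eq:integral} with $d=k$ yields $A_{ii'}=c\,(k\delta_{ii'}-1)$ with $c:=\frac{2S_k}{k^2(k+2)}$; this rank-one-plus-diagonal structure is what makes all the cancellations work. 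The statements for $\Xi$ and $X_j$ are identical after replacing $k$ by $n-k$, so I would treat only the $\Omega$/$W_i$ case.

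For the vanishing identities \eqref{eq:WX-B} I would argue by parity: if $I=(i_1,i_2)\in\Lambda_1^B$ with $i_1<i_2$, then $\Omega_I=\omega_{i_1}\omega_{i_2}$ is odd under the reflection $\omega_{i_1}\mapsto-\omega_{i_1}$, whereas $\omega_i^2-\tfrac1k$ is even and $\mathbb{S}^{k-1}$ is invariant under this reflection; hence $W_i(\Omega_I)=0$ for every $i$, and likewise $X_j(\Xi_J)=0$.

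For \eqref{eq:W}, I would first rewrite $\Omega_{0p}=\frac{1}{\sqrt{2p(p+1)}}\big(p\,v_{p+1}-\sum_{i'=1}^p v_{i'}\big)$ (the constants $\tfrac1k$ cancel), so that $W_i(\Omega_{0p})=\frac{1}{\sqrt{2p(p+1)}}\big(pA_{i,p+1}-\sum_{i'=1}^p A_{ii'}\big)$. Substituting $A_{ii'}=c(k\delta_{ii'}-1)$ and simplifying collapses this to the explicit form
$$W_i(\Omega_{0p})=\frac{ck}{\sqrt{2p(p+1)}}\Big(p\,\delta_{i,p+1}-\mathbf{1}_{\{i\le p\}}\Big),$$
i.e. it equals $-b_p$ for $i\le p$, equals $pb_p$ for $i=p+1$, and vanishes for $i\ge p+2$, where $b_p:=ck/\sqrt{2p(p+1)}$. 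From here $\sum_{i=1}^k W_i(\Omega_{0p})=p(-b_p)+pb_p=0$ is immediate. For the orthogonality/norm relation I would compute $\sum_i W_i(\Omega_{0p})W_i(\Omega_{0q})$ directly: taking $p\le q$, on the support $\{1,\dots,p+1\}$ of the first factor the second factor is constantly $-b_q$ whenever $p+1\le q$, so for $p<q$ the sum telescopes to $p\,b_pb_q+(pb_p)(-b_q)=0$, while for $p=q$ it equals $p\,b_p^2+(pb_p)^2=p(p+1)b_p^2=\frac{c^2k^2}{2}=\frac{2S_k^2}{k^2(k+2)^2}$, matching the claimed $\frac{2S_k^2}{k^2(k+2)^2}\delta_I^{I'}$.

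The argument is essentially routine once the closed form for $W_i(\Omega_{0p})$ is in hand; the only point requiring care is the index bookkeeping in the cross term, where one must invoke $p+1\le q$ to guarantee that the second factor is constantly $-b_q$ across the entire support of the first — this is precisely what forces the telescoping, hence the orthogonality. I do not anticipate any genuine obstacle beyond this verification.
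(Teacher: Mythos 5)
Your proposal is correct and follows essentially the same route as the paper: both reduce the claim to the explicit evaluation of $W_i(\Omega_{0l})$ via the second moments of Lemma \ref{lem:integral} (your closed form $W_i(\Omega_{0p})=b_p(p\,\delta_{i,p+1}-\mathbf{1}_{\{i\le p\}})$ is exactly the case table the paper displays), after which \eqref{eq:W}--\eqref{eq:X} follow by the direct summation you carry out and \eqref{eq:WX-B} by the parity observation the paper leaves implicit in ``follows directly from the definitions.'' Your write-up merely makes explicit the ``direct calculations'' the paper omits; no discrepancy to report.
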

\begin{proof}
Equations (\ref{eq:WX-B}) follows directly from the definitions.
To prove \eqref{eq:W}, we may assume $I=0l, I'=0l'$.  From  Lemma \ref{lem:integral}, we have
\begin{equation}
\sqrt{2l(l+1)}W_{i}(\Omega_{0l})=\int_{\mathbb{S}^{k-1}}\left(\omega_i^2-\frac{1}{k}\right)\left(l\omega_{l+1}^2-\sum_{j=1}^l\omega_j^2\right) \ud\omega
=\frac{2S_k}{k(k+2)}\times \left\{\begin{aligned}
&-1,& \text{for }1\le i\le l;\\
&l, &\text{for }i=l+1;\\
&0,&\text{for }l+1<i\le k,
\end{aligned}\right.\nonumber
\end{equation}
which yields (\ref{eq:W}) by direct calculations. The proof of (\ref{eq:X}) is similar.
\end{proof}

We introduce the functionals on $L^2\big([0,\pi/2],\ud\mu_k^{\theta}\big)$:
\begin{align*}
\mathcal{I}_0(a)=&\Big(\int_0^{\frac{\pi}{2}}e^{\eta\sin^2\theta} \ud\mu_k^{\theta}\Big)\Big(\int_0^{\frac{\pi}{2}}e^{-\eta\sin^2\theta}a^2(\theta)\ud\mu_k^{\theta}\Big)
    -\frac{2\alpha}{k(n-k)}\left(\int_{0}^{\frac\pi2}\sin\theta\cos\theta a(\theta)\ud\mu_k^{\theta} \right)^2;\\
\mathcal{I}_1(a)=&\Big(\int_0^{\frac{\pi}{2}}e^{\eta\sin^2\theta} \ud\mu_k^{\theta}\Big)\Big(\int_0^{\frac{\pi}{2}}e^{-\eta\sin^2\theta}a^2(\theta)\ud\mu_k^{\theta}\Big)
-\frac{2\alpha}{k(k+2)}\left(\int_{0}^{\frac\pi2}\sin^2\theta a(\theta)\ud\mu_k^{\theta} \right)^2;\\
\mathcal{I}_2(a)=&\Big(\int_0^{\frac{\pi}{2}}e^{\eta\sin^2\theta} \ud\mu_k^{\theta}\Big)\Big(\int_0^{\frac{\pi}{2}}e^{-\eta\sin^2\theta}a^2(\theta)\ud\mu_k^{\theta}\Big)
  -\frac{2\alpha}{(n-k)(n-k+2)}\left(\int_{0}^{\frac\pi2}\cos^2\theta a(\theta)\ud\mu_k^{\theta} \right)^2;\\
\mathcal{I}_3(b)=&\Big(\int_0^{\frac{\pi}{2}}e^{\eta\sin^2\theta} \ud\mu_k^{\theta}\Big)\Big(\int_0^{\frac{\pi}{2}}e^{-\eta\sin^2\theta}b^2(\theta)\ud\mu_k^{\theta}\Big)
  -\frac{n\alpha}{k(n-k)}\left(\int_{0}^{\frac\pi2}\sin^2\theta b(\theta)\ud\mu_k^{\theta} \right)^2.
\end{align*}
The following diagonalization lemma, together with \eqref{decomp:H-0}, enable us to reduce the analysis of $\langle \mathcal{H}_{f_0}\phi,\phi\rangle $ to the study of the above four functionals.
\begin{lemma}\label{lem:decomp}
For $\phi\in {V}^{\top}$ be given by
\begin{equation}\label{decomp:phi}
\begin{aligned}
\phi(\theta,\omega,\xi)=&\sum_{I\in\Lambda_1}a_{1}^I(\theta)\Omega_{I}(\omega)+\sum_{J\in\Lambda_2}a_{2}^J(\theta)\Xi_{J}(\xi)
+\sum_{K\in \Lambda_0}a_{0}^K(\theta)\Theta_{K}(\omega,\xi)+b(\theta),
\end{aligned}
\end{equation}
we have the following decomposition:
\begin{align}\nonumber
\langle \mathcal{H}_{f_0}\phi,\phi\rangle =&\,\frac{S_{k}^2S_{n-k}^2}{k(k+2)} \sum_{I\in\Lambda_1}\mathcal{I}_1(a_{1}^I)+
\frac{S_{k}^2S_{n-k}^2}{(n-k)(n-k+2)} \sum_{J\in\Lambda_2}\mathcal{I}_2(a_{2}^J)\\ \label{decomp:H}
&+\frac{S_{k}^2S_{n-k}^2}{k(n-k)} \sum_{K\in\Lambda_0}\mathcal{I}_0(a_{0}^K)+ S_{k}^2S_{n-k}^2\mathcal{I}_3(b).
\end{align}
\end{lemma}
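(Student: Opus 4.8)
The plan is to treat the two terms of \eqref{def:H} separately, namely the \emph{entropic} part $\int_{\mathbb{S}^{n-1}}\phi^2/f_0\,\ud m$ and the \emph{interaction} part $-\alpha\big|\int_{\mathbb{S}^{n-1}}(m\otimes m-\frac1nI_n)\phi\,\ud m\big|^2$, and to show that each of them splits, basis function by basis function, into the matching pieces of the four functionals $\mathcal{I}_0,\dots,\mathcal{I}_3$. The key structural facts I will exploit are the pairwise orthogonality of $U$ (Proposition \ref{prop:orth}) for the entropic part and the summation identities \eqref{eq:W}--\eqref{eq:X} for the interaction part.

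First I would dispose of the entropic part. Since $f_0=h^{(k)}_\eta$ depends only on $\theta$, I write $1/f_0=\big(\int_0^{\pi/2}e^{\eta\sin^2\theta}\ud\mu_k^\theta\big)S_kS_{n-k}\,e^{-\eta\sin^2\theta}$ using \eqref{equ:critical-0} and $\ud m=\ud\mu_k^\theta\,\ud\omega\,\ud\xi$. Inserting the expansion \eqref{decomp:phi} of $\phi$ and integrating first in $(\omega,\xi)$, the pairwise orthogonality of the elements of $U$ (Proposition \ref{prop:orth}, with \eqref{eq:cross} handling the products of different factors and $\int_{\mathbb{S}^{k-1}\times\mathbb{S}^{n-k-1}}p=0$ handling the cross terms with $b$) kills every cross term, leaving a sum of squared norms. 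Using the values in \eqref{eq:omega}--\eqref{eq:theta} and $\int 1=S_kS_{n-k}$ for the $b$-term, the entropic part reproduces exactly the first summand of each $\mathcal{I}$, with the prefactors $\frac{S_k^2S_{n-k}^2}{k(k+2)}$, $\frac{S_k^2S_{n-k}^2}{(n-k)(n-k+2)}$, $\frac{S_k^2S_{n-k}^2}{k(n-k)}$ and $S_k^2S_{n-k}^2$ announced in \eqref{decomp:H}.

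The interaction part requires computing the entries of $M[\phi]:=\int_{\mathbb{S}^{n-1}}(m\otimes m-\frac1nI_n)\phi\,\ud m$ via the component identities established in the proof of Lemma \ref{lem:ortho}. The off-diagonal entries are immediate: $M_{ii'}=\int\sin^2\theta\,\Omega_{ii'}\phi$, $M_{k+j,k+j'}=\int\cos^2\theta\,\Xi_{jj'}\phi$ and $M_{i,k+j}=\int\sin\theta\cos\theta\,\Theta_{ij}\phi$ each select a single coefficient by orthogonality, and after squaring and using that each such entry is counted twice in $|M|^2=\sum_{a,b}M_{ab}^2$, they produce precisely the $-\frac{2\alpha}{k(k+2)}$, $-\frac{2\alpha}{(n-k)(n-k+2)}$ and $-\frac{2\alpha}{k(n-k)}$ terms of $\mathcal{I}_1$ (for $I\in\Lambda_1^B$), $\mathcal{I}_2$ (for $J\in\Lambda_2^B$) and $\mathcal{I}_0$. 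The diagonal entries are the delicate ones: writing $m_i^2-\frac1n=\sin^2\theta(\omega_i^2-\frac1k)+(\frac1k\sin^2\theta-\frac1n)$, one finds $M_{ii}=S_{n-k}\sum_{I\in\Lambda_1^A}\big(\int\sin^2\theta\,a_1^I\,\ud\mu_k^\theta\big)W_i(\Omega_I)+S_kS_{n-k}\int(\frac1k\sin^2\theta-\frac1n)b\,\ud\mu_k^\theta$, since $W_i(\Omega_I)=0$ for $I\in\Lambda_1^B$ by \eqref{eq:WX-B}, the $\Theta_K$ and $\Xi_J$ parts integrate to zero in $(\omega,\xi)$, and similarly for $M_{k+j,k+j}$ with $X_j$.

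The heart of the argument, and the step I expect to be the main obstacle, is summing the squared diagonal entries. In $\sum_{i=1}^kM_{ii}^2$ the cross term between the $\Omega$-part and the $b$-part drops out because $\sum_iW_i(\Omega_I)=0$, while the quadratic form in the $a_1^I$ diagonalizes through $\sum_iW_i(\Omega_I)W_i(\Omega_{I'})=\frac{2S_k^2}{k^2(k+2)^2}\delta_I^{I'}$ of \eqref{eq:W}; this regenerates the $\Lambda_1^A$ part of the $\mathcal{I}_1$ interaction term with the \emph{same} constant $\frac{2\alpha}{k(k+2)}$ as the off-diagonal $\Lambda_1^B$ term, so that $\mathcal{I}_1$ applies uniformly over all of $\Lambda_1$ (and likewise \eqref{eq:X} for $\mathcal{I}_2$). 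Finally the $b$-contributions from the two blocks must be recombined: using $\int b\,\ud\mu_k^\theta=0$ gives $\int(\frac1k\sin^2\theta-\frac1n)b\,\ud\mu_k^\theta=\frac{B}{k}$ and $\int(\frac{1}{n-k}\cos^2\theta-\frac1n)b\,\ud\mu_k^\theta=-\frac{B}{n-k}$ with $B:=\int\sin^2\theta\,b\,\ud\mu_k^\theta$, whence $k\big(\frac{B}{k}\big)^2+(n-k)\big(\frac{B}{n-k}\big)^2=\frac{n}{k(n-k)}B^2$, exactly the coefficient in $\mathcal{I}_3(b)$. Tracking the many $S_k,S_{n-k}$ factors and checking that every off-block and cross term in $|M|^2$ genuinely cancels are the bookkeeping hazards; the real content is the diagonalization via \eqref{eq:W}--\eqref{eq:X} and the trace-free recombination of the $b$-terms.
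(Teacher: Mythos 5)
Your proposal is correct and follows essentially the same route as the paper's proof: orthogonality of $U$ (Proposition \ref{prop:orth}) diagonalizes the entropic term, the off-diagonal entries of $M[\phi]$ each select a single coefficient, and the diagonal entries are handled via \eqref{eq:W}--\eqref{eq:X} together with the vanishing of $\sum_iW_i(\Omega_I)$ and the recombination of the $b$-contributions using $\int b\,\ud\mu_k^{\theta}=0$. All the constants you identify (including the factor $2$ for off-diagonal entries and the $\frac{n}{k(n-k)}$ coefficient in $\mathcal{I}_3$) match the paper's computation.
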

\begin{remark}
  When $k=1$ (or $n-1$), there is no $\mathcal{I}_1$ (or $\mathcal{I}_2$) term in (\ref{decomp:phi})-(\ref{decomp:H}) as $\Lambda_1$(or $\Lambda_2$) $=\emptyset$.
\end{remark}
\begin{proof}
For $\phi(m)=\phi(\theta,\omega,\xi)$ takes the form (\ref{decomp:phi}), we can write
\begin{align*}
\langle \mathcal{H}_{f_0}\phi,\phi\rangle=&\int_{\mathbb{S}^{n-1}}\frac{\phi^2}{f_0}\ud m-2\alpha\sum_{i=1}^k\sum_{j=1}^{n-k}\left(\int_{\mathbb{S}^{n-1}}\sin\theta\cos\theta \omega_i\xi_j\phi(m)\ud m \right)^2\\
&\quad -\alpha\sum_{i=1}^k\left(\int_{\mathbb{S}^{n-1}}\sin^2\theta\omega_i^2\phi(m)\ud m\right)^2-2\alpha\sum_{1\le i<i'\le k}\left(\int_{\mathbb{S}^{n-1}}\sin^2\theta \omega_i\omega_{i'}\phi(m)\ud m\right)^2\\
&\quad-\alpha\sum_{j=1}^{n-k}\left(\int_{\mathbb{S}^{n-1}}\cos^2\theta\xi_j^2\phi(m)\ud m\right)^2-2\alpha\sum_{1\le j<j'\le n-k}\left(\int_{\mathbb{S}^{n-1}}\cos^2\theta\xi_j\xi_{j'}\phi(m)\ud m\right)^2.%\\
\end{align*}
First, using (\ref{eq:theta}) we obtain
\begin{align*}
\sum_{i=1}^k\sum_{j=1}^{n-k}\left(\int_{\mathbb{S}^{n-1}}\sin\theta\cos\theta \omega_i\xi_j\phi(m)\ud m \right)^2
&=  \sum_{K\in\Lambda_0}\left(\int_{\mathbb{S}^{n-1}}\sin\theta\cos\theta \Theta_K(\omega)\phi(m)\ud m\right)^2\\
&= \frac{S_{k}^2S_{n-k}^2}{k^2(n-k)^2} \sum_{K\in\Lambda_0}\left(\int_{0}^{\frac{\pi}{2}}\sin\theta\cos\theta a_0^K(\theta)\ud\mu_k^{\theta} \right)^2,
\end{align*}
where we have used \eqref{decomp:measure}.
Applying Propositions \ref{prop:orth} and \ref{prop:decomp}, we have
\begin{align*}
&\sum_{i=1}^k\left(\int_{\mathbb{S}^{n-1}}\sin^2\theta\omega_i^2 \phi(m)\ud m\right)^2\\
=&\sum_{i=1}^k\left(\int_{\mathbb{S}^{n-1}}\sin^2\theta\omega_i^2\Big[\sum_{I\in\Lambda_1}a_{1}^I(\theta)\Omega_{I}(\omega)+b(
\theta)\Big]\ud m\right)^2\\
=&\,\sum_{i=1}^k\left(\sum_{I\in\Lambda_1}S_{n-k} W_i(\Omega_{I})\int_{0}^{\frac\pi2}\sin^2\theta a_{1}^I(\theta)\ud\mu_k^{\theta} +\frac{S_kS_{n-k}}{k}\int_{0}^{\frac\pi2}\sin^2\theta b(\theta)\ud \mu_k^{\theta}\right)^2\\
=&\,S_{n-k}^2\sum_{I\in\Lambda_1^A}\sum_{i=1}^kW_i^2(\Omega_{I})\left(\int_{0}^{\frac\pi2}\sin^2\theta a_{1}^I(\theta)\ud\mu_k^{\theta} \right)^2 +\frac{S_k^2S_{n-k}^2}{k}\left(\int_{0}^{\frac\pi2}\sin^2\theta b(\theta)\ud \mu_k^{\theta}\right)^2\\
=&\, \frac{2 S_k^2S_{n-k}^2}{k^2(k+2)^2}\sum_{I\in\Lambda_1^A}
\left(\int_{0}^{\frac\pi2}\sin^2\theta a_{1}^I(\theta)\ud\mu_k^{\theta} \right)^2
+\frac{S_k^2S_{n-k}^2}{k}\left(\int_{0}^{\frac\pi2}\sin^2\theta b(\theta)\ud \mu_k^{\theta}\right)^2.
\end{align*}
Similarly,
\begin{align*}
&  \sum_{j=1}^{n-k}\left(\int_{\mathbb{S}^{n-1}}\cos^2\theta\xi_j^2\phi(m)\ud m\right)^2\\
  =&\, \frac{2 S_k^2S_{n-k}^2}{(n-k)^2(n-k+2)^2}\sum_{I\in\Lambda_2^A}
\left(\int_{0}^{\frac\pi2}\cos^2\theta a_{2}^I(\theta)\ud\mu_k^{\theta} \right)^2
+\frac{S_k^2S_{n-k}^2}{n-k}\left(\int_{0}^{\frac\pi2}\cos^2\theta b(\theta)\ud \mu_k^{\theta}\right)^2.
\end{align*}
We get from (\ref{eq:omega}) that
\begin{align*}
  \sum_{1\le i<i'\le k}\left(\int_{\mathbb{S}^{n-1}}\sin^2\theta \omega_i\omega_{i'}\phi(m)\ud m\right)^2
&\,  =\sum_{I\in\Lambda_1^B}\left(\int_{\mathbb{S}^{n-1}}\sin^2\theta \Omega_I(\omega)\phi(m)\ud m\right)^2\\
&\,  =\frac{S_k^2S_{n-k}^2}{k^2(k+2)^2}\sum_{I\in\Lambda_1^B}\left(\int_{0}^{\frac{\pi}{2}}\sin^2\theta a_1^I(\theta)\ud \mu_k^{\theta}\right)^2.
\end{align*}
By similar calculation, we have
\begin{align*}
  \sum_{1\le j<j'\le n-k}\left(\int_{\mathbb{S}^{n-1}}\cos^2\theta\Xi_{jj'}\phi(m)\ud m\right)^2
&\,  =\sum_{J\in\Lambda_2^B}\left(\int_{\mathbb{S}^{n-1}}\cos^2\theta \Omega_I(\omega)\phi(m)\ud m\right)^2\\
&\,  =\frac{S_k^2S_{n-k}^2}{(n-k)^2(n-k+2)^2}\sum_{J\in\Lambda_2^B}
\left(\int_{0}^{\frac{\pi}{2}}\cos^2\theta a_2^J(\theta)\ud \mu_k^{\theta}\right)^2.
\end{align*}

Note that $\frac{1}{f_0}=S_kS_{n-k}e^{-\eta\sin^2\theta}\int_0^{\frac{\pi}{2}}e^{\eta\sin^2\theta} \ud\mu_k^{\theta}$. Moreover, Proposition \ref{prop:orth}
gives us that the elements in $U$ are pairwisely orthogonal under the $L^2(\mathbb{S}^{k-1}\times \mathbb{S}^{n-k-1})$-inner product. Thus we obtain
\begin{align*}
\int_{\mathbb{S}^{n-1}}\frac{\phi^2}{f_0}\ud m=
&\int_{\mathbb{S}^{n-1}}\frac{1}{f_0}\Big(\sum_{I\in\Lambda_1}\big(a_{1}^I(\theta)\Omega_{I}(\omega)\big)^2
+\sum_{J\in\Lambda_2}\big(a_{2}^J(\theta)\Xi_{J}(\xi)\big)^2\\
&\qquad+\sum_{K\in \Lambda_0}\big(a_{0}^K(\theta)\Theta_{K}(\omega,\xi)\big)^2+b^2(\theta)\Big)\ud m\\
=&\,S_k^2S_{n-k}^2\Big(\int_0^{\frac{\pi}{2}}e^{\eta\sin^2\theta} \ud\mu_k^{\theta}\Big)\int_0^{\frac{\pi}{2}}e^{-\eta\sin^2\theta}\bigg\{\frac{1}{k(k+2)}\sum_{I\in\Lambda_1}\big(a_{1}^I(\theta)\big)^2
\\
&\qquad+\frac{1}{(n-k)(n-k+2)}\sum_{J\in\Lambda_2}\big(a_{2}^J(\theta)\big)^2+\frac{1}{k(n-k)}\sum_{K\in \Lambda_0}\big(a_{0}^K(\theta)\big)^2+b^2(\theta)\bigg\}\ud\mu_k^{\theta}.
\end{align*}
Combining the above equalities, we obtain the decomposition \eqref{decomp:H}.
\end{proof}

\section{Nonnegativity of decomposed functionals}\label{sec:nonnegativity}
By Lemma $\ref{lem:decomp}$, it suffices to study the nonnegativity of functionals $\mathcal{I}_\gamma(\gamma=0,1,2,3)$.
Define
\begin{align*}
  A_l(\eta):=\int_0^{\frac{\pi}{2}}e^{\eta\sin^2\theta}\sin^l\theta \ud\mu_k^{\theta}.
\end{align*} Then
\begin{align}\label{eq:alpha}
  \alpha=\sigma_k(\eta)=\frac{(n-k)k}{2}\frac{A_0}{(A_2-A_4)}.
\end{align}
\begin{proposition}\label{prop:I0}
  $\mathcal{I}_0(a)\ge 0$ for all $a\in L^2\big([0,\pi/2],\ud\mu_k^{\theta}\big)$ .
\end{proposition}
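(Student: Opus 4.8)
The plan is to recognize $\mathcal{I}_0$ as a weighted Cauchy--Schwarz inequality in disguise. First I would use the relation \eqref{eq:alpha}, which gives $\frac{2\alpha}{k(n-k)}=\frac{A_0}{A_2-A_4}$, to factor out the positive quantity $A_0$ and rewrite
\[
\mathcal{I}_0(a)=A_0\left[\int_0^{\pi/2}e^{-\eta\sin^2\theta}a^2\,\ud\mu_k^{\theta}-\frac{1}{A_2-A_4}\left(\int_0^{\pi/2}\sin\theta\cos\theta\,a\,\ud\mu_k^{\theta}\right)^2\right].
\]
Since $A_0>0$, it suffices to show that the bracketed expression is nonnegative.

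Next I would record the elementary identity $\sin^2\theta\cos^2\theta=\sin^2\theta-\sin^4\theta$, which yields
\[
A_2-A_4=\int_0^{\pi/2}e^{\eta\sin^2\theta}\sin^2\theta\cos^2\theta\,\ud\mu_k^{\theta}.
\]
This identifies $A_2-A_4$ as precisely the squared weighted $L^2$-norm of the function $\sin\theta\cos\theta$ with the weight $e^{\eta\sin^2\theta}$; in particular it is a strictly positive number.

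The key step is then to split the cross term by writing $\sin\theta\cos\theta\,a=\bigl(\sin\theta\cos\theta\,e^{\eta\sin^2\theta/2}\bigr)\cdot\bigl(a\,e^{-\eta\sin^2\theta/2}\bigr)$ and to apply the Cauchy--Schwarz inequality with respect to the measure $\ud\mu_k^{\theta}$:
\[
\left(\int_0^{\pi/2}\sin\theta\cos\theta\,a\,\ud\mu_k^{\theta}\right)^2\le\left(\int_0^{\pi/2}e^{\eta\sin^2\theta}\sin^2\theta\cos^2\theta\,\ud\mu_k^{\theta}\right)\left(\int_0^{\pi/2}e^{-\eta\sin^2\theta}a^2\,\ud\mu_k^{\theta}\right).
\]
The first factor on the right is exactly $A_2-A_4$, so dividing through by it gives precisely the bracketed inequality, and hence $\mathcal{I}_0(a)\ge0$. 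There is essentially no obstacle here beyond choosing the correct weight split $e^{\pm\eta\sin^2\theta/2}$ for the Cauchy--Schwarz step; the positivity of $A_0$ and of $A_2-A_4$ (the latter being the integral of a manifestly nonnegative integrand) is automatic, so $\mathcal{I}_0$ is the mildest of the four functionals and requires no sign analysis of $\sigma_k$.
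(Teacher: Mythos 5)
Your proof is correct and is essentially identical to the paper's: both apply the weighted Cauchy--Schwarz inequality with the split $e^{\pm\eta\sin^2\theta/2}$ to the cross term and then use the constraint $\alpha=\sigma_k(\eta)$ (equivalently $\tfrac{2\alpha}{k(n-k)}=\tfrac{A_0}{A_2-A_4}$) to see that the resulting coefficient vanishes exactly. The only cosmetic difference is that you factor out $A_0$ first, while the paper keeps the product form and also records the equality case $a(\theta)=Ce^{\eta\sin^2\theta}\sin\theta\cos\theta$, which is used later in the proof of Theorem \ref{thm:2}.
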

\begin{proof}
We apply the Cauchy-Schwarz inequality to obtain:
\begin{align*}
\mathcal{I}_0(a)
 =&\, \Big(\int_0^{\frac{\pi}{2}} e^{\eta\sin^2\theta} \ud\mu_k^{\theta}\Big)\Big(\int_0^{\frac{\pi}{2}}e^{-\eta\sin^2\theta} a^2(\theta) \ud\mu_k^{\theta}\Big)- \frac{2\alpha}{k(n-k)}\Big(\int_0^{\frac{\pi}{2}}\sin\theta\cos\theta a(\theta) \ud\mu_k^{\theta}\Big)^2\\
\geq &\, \Big(\int_0^{\frac{\pi}{2}}e^{-\eta\sin^2\theta}a^2(\theta)\ud \mu(\theta) \Big)\left(\int_0^{\frac{\pi}{2}}e^{\eta\sin^2\theta}\ud\mu_k^{\theta}-
\frac{2\alpha}{k(n-k)}\int_0^{\frac{\pi}{2}}e^{\eta\sin^2\theta}\sin^2\theta\cos^2\theta\ud\mu_k^{\theta}\right)\\
=&~0.
\end{align*}
Here we used the relation $\alpha=\sigma_k(\eta)$. The equality holds if and only if $a(\theta)=Ce^{\eta\sin^2\theta}\sin\theta\cos\theta$.
\end{proof}
\begin{proposition}\label{prop:I1}
  $\mathcal{I}_1(a)\ge 0$ for all $a\in L^2\big([0,\pi/2],\ud\mu_k^{\theta}\big)$ if and only if $\eta\le 0$.
\end{proposition}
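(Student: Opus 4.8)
The plan is to view $\mathcal{I}_1$ as a rank-one perturbation of a positive quadratic form on the weighted space $L^2\big([0,\pi/2],e^{-\eta\sin^2\theta}\ud\mu_k^\theta\big)$, read off a \emph{sharp} nonnegativity criterion, and then recast that criterion as a monotonicity property of a ratio of the moments $A_l(\eta)$. Concretely, I would introduce the weighted inner product $\langle u,v\rangle_w:=\int_0^{\pi/2}u(\theta)v(\theta)e^{-\eta\sin^2\theta}\ud\mu_k^\theta$ and set $\phi(\theta):=\sin^2\theta\,e^{\eta\sin^2\theta}$, so that $\int_0^{\pi/2}\sin^2\theta\,a\,\ud\mu_k^\theta=\langle a,\phi\rangle_w$ and $\|\phi\|_w^2=A_4$. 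Then
\[
\mathcal{I}_1(a)=A_0\,\langle a,a\rangle_w-\frac{2\alpha}{k(k+2)}\langle a,\phi\rangle_w^2,
\]
and decomposing $a=c\phi+a^\perp$ with $\langle a^\perp,\phi\rangle_w=0$ yields
\[
\mathcal{I}_1(a)=c^2\|\phi\|_w^2\Big(A_0-\frac{2\alpha}{k(k+2)}\|\phi\|_w^2\Big)+A_0\|a^\perp\|_w^2.
\]
Since $A_0>0$, this shows that $\mathcal{I}_1\ge0$ for all $a$ \emph{if and only if} $A_0\ge\frac{2\alpha}{k(k+2)}A_4$, the extremal (and only possibly negative) direction being $a\propto\phi$.

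Next I would substitute $\alpha=\sigma_k(\eta)=\frac{(n-k)k}{2}\frac{A_0}{A_2-A_4}$ from \eqref{eq:alpha}. Because $A_0>0$ and $A_2-A_4=\int_0^{\pi/2}e^{\eta\sin^2\theta}\sin^2\theta\cos^2\theta\,\ud\mu_k^\theta>0$ (so $\alpha>0$), clearing denominators collapses the criterion $A_0\ge\frac{2\alpha}{k(k+2)}A_4$ to the clean inequality
\[
(k+2)A_2(\eta)\ge(n+2)A_4(\eta).
\]
It then remains to show this holds exactly when $\eta\le0$. I would set $G(\eta):=A_4(\eta)/A_2(\eta)$ and establish two facts. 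First, $G(0)=\frac{k+2}{n+2}$, obtained by evaluating $A_2(0)$ and $A_4(0)$ as Beta integrals and simplifying with $\Gamma(x+1)=x\Gamma(x)$. Second, $G$ is strictly increasing: differentiating under the integral sign gives $\frac{\ud}{\ud\eta}A_l=A_{l+2}$, so $G'$ has the sign of $A_6A_2-A_4^2$, which is strictly positive by the Cauchy--Schwarz inequality $A_4=\int\sin\theta\cdot\sin^3\theta\,\big(e^{\eta\sin^2\theta}\ud\mu_k^\theta\big)<A_2^{1/2}A_6^{1/2}$, strict because $\sin\theta$ is nonconstant on the support of the measure. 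Combining the two, $(k+2)A_2\ge(n+2)A_4\Leftrightarrow G(\eta)\le\frac{k+2}{n+2}\Leftrightarrow\eta\le0$, which is the assertion.

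The routine parts are the Beta-function evaluation giving $G(0)$ and the differentiation yielding $G'$; the one genuinely important step is the rank-one reduction, which makes the Cauchy--Schwarz bound sharp and hence produces an \emph{exact} criterion rather than a merely sufficient one. The only points requiring a little care are confirming that the extremal direction $a\propto\phi$ lies in the admissible class and that the measure $e^{\eta\sin^2\theta}\ud\mu_k^\theta$ has full support so that the Cauchy--Schwarz inequality is strict; both are immediate from $\eta\in\mathbb{R}$ and $1\le k\le n-1$.
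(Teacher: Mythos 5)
Your argument is correct. The first half is in substance identical to the paper's: your rank-one decomposition $a=c\phi+a^\perp$ in the weighted space with $\phi=\sin^2\theta\,e^{\eta\sin^2\theta}$ is exactly the equality-case analysis of the Cauchy--Schwarz step the paper performs, and both reduce the problem to the sharp criterion $A_0\ge\frac{2\alpha}{k(k+2)}A_4$, i.e.\ (after inserting $\alpha=\sigma_k(\eta)$ and using $A_2-A_4>0$) to the moment inequality $(k+2)A_2\ge(n+2)A_4$. Where you genuinely diverge is in deciding when this inequality holds. The paper stays with a single integral: it uses the Beta-function identity $\int_0^{\pi/2}\sin^2\theta\bigl[(k+2)-(n+2)\sin^2\theta\bigr]\ud\mu_k^{\theta}=0$ to subtract the constant $e^{\eta\frac{k+2}{n+2}}$ from $e^{\eta\sin^2\theta}$, after which the integrand $\bigl(e^{\eta\sin^2\theta}-e^{\eta\frac{k+2}{n+2}}\bigr)\sin^2\theta\bigl[(k+2)-(n+2)\sin^2\theta\bigr]$ has a pointwise sign determined by the sign of $-\eta$. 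You instead study the ratio $G(\eta)=A_4/A_2$, pin down $G(0)=\frac{k+2}{n+2}$ by the same Beta-function computation, and prove strict monotonicity of $G$ from $A_4^2<A_2A_6$ (Cauchy--Schwarz, strict by full support of the measure). Both routes deliver the clean ``if and only if.'' The paper's version is slightly more economical (one identity plus a pointwise sign check, no differentiation under the integral), while yours is arguably more systematic: the monotone-ratio statement $G'>0$ is reusable and mirrors the log-convexity arguments used elsewhere in the paper (e.g.\ in the proof of Lemma \ref{lem:single-root}), so nothing is lost and the structure of the threshold at $\eta=0$ is perhaps clearer.
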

\begin{proof}
Using Cauchy-Schwarz inequality, we have
\begin{align*}
\mathcal{I}_1(a)=&\,\Big(\int_0^{\frac{\pi}{2}} e^{\eta\sin^2\theta} \ud\mu_k^{\theta}\Big)\Big( \int_0^{\frac{\pi}{2}}e^{-\eta\sin^2\theta} a^2(\theta) \ud\mu_k^{\theta}\Big)-\frac{2\alpha}{k(k+2)} \Big(\int_0^{\frac{\pi}{2}}\sin^2\theta a(\theta) \ud\mu_k^{\theta} \Big)^2\\
\geq&\,\Big(\int_0^{\frac{\pi}{2}}e^{-\eta\sin^2\theta} a^2(\theta)\ud\mu_k^{\theta}\Big)
\left(\int_0^{\frac{\pi}{2}}e^{\eta\sin^2\theta}\ud\mu_k^{\theta}
-\frac{2\alpha}{k(k+2)}\int_0^{\frac{\pi}{2}}e^{\eta\sin^2\theta}\sin^4\theta \ud\mu_k^{\theta}\right).
\end{align*}
Note that the equality can be attained by $a(\theta)=Ce^{\eta\sin^2\theta}\sin^2\theta$. Applying $\alpha=\sigma_k(\eta)$, we get
\begin{align*}
&\int_0^{\frac{\pi}{2}}e^{\eta\sin^2\theta}\ud\mu_k^{\theta}-\frac{2\alpha}{k(k+2)}\int_0^{\frac{\pi}{2}}e^{\eta\sin^2\theta}\sin^4\theta \ud\mu_k^{\theta}\\
 &=\frac{2\alpha}{k(k+2)(n-k)}\int_0^{\frac{\pi}{2}}e^{\eta\sin^2\theta}\sin^2\theta\Big[(k+2)-(n+2)\sin^2\theta\Big]\ud\mu_k^{\theta}\\
&=\frac{2\alpha}{k(k+2)(n-k)} \int_0^{\frac{\pi}{2}} \bigg(e^{\eta\sin^2\theta}-e^{\eta\frac{k+2}{n+2}}\bigg)\sin^2\theta\Big[(k+2)-(n+2)\sin^2\theta\Big]\ud\mu_k^{\theta},
\end{align*}
which is nonnegative if and only if $\eta\le 0$.
\end{proof}

\begin{proposition}\label{prop:I2}
  $\mathcal{I}_2(a)\ge 0$ for all $a\in L^2\big([0,\pi/2],\ud\mu_k^{\theta}\big)$ if and only if $\eta\ge 0$.
\end{proposition}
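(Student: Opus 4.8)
The plan is to mirror, line for line, the proof of Proposition \ref{prop:I1}, with the roles of $\sin^2\theta$ and $\cos^2\theta$ interchanged and the sign of $\eta$ reversed. First I would apply Cauchy--Schwarz to the cross term, splitting $\cos^2\theta\,a(\theta)=\big(e^{-\eta\sin^2\theta/2}a\big)\big(e^{\eta\sin^2\theta/2}\cos^2\theta\big)$, to get
\begin{align*}
\Big(\int_0^{\frac\pi2}\cos^2\theta\, a(\theta)\,\ud\mu_k^\theta\Big)^2\le\Big(\int_0^{\frac\pi2}e^{-\eta\sin^2\theta}a^2\,\ud\mu_k^\theta\Big)\Big(\int_0^{\frac\pi2}e^{\eta\sin^2\theta}\cos^4\theta\,\ud\mu_k^\theta\Big).
\end{align*}
This reduces $\mathcal{I}_2(a)\ge0$ to the nonnegativity of the single $\theta$-factor
\begin{align*}
J:=\int_0^{\frac\pi2}e^{\eta\sin^2\theta}\,\ud\mu_k^\theta-\frac{2\alpha}{(n-k)(n-k+2)}\int_0^{\frac\pi2}e^{\eta\sin^2\theta}\cos^4\theta\,\ud\mu_k^\theta,
\end{align*}
and since equality in Cauchy--Schwarz is attained by $a(\theta)=Ce^{\eta\sin^2\theta}\cos^2\theta$, the statement ``$\mathcal{I}_2(a)\ge0$ for all $a$'' is \emph{equivalent} to $J\ge0$.

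Next I would substitute $\alpha=\sigma_k(\eta)=\tfrac{(n-k)k}{2}\tfrac{A_0}{A_2-A_4}$ from \eqref{eq:alpha} and expand $\cos^4\theta=(1-\sin^2\theta)^2$ into $A_0-2A_2+A_4$. Using $A_0>0$ and $A_2-A_4>0$, a short computation shows that $J$ carries the same sign as $(n-k+2)(A_2-A_4)-k(A_0-2A_2+A_4)$, so that
\begin{align*}
J\ge0\quad\Longleftrightarrow\quad \int_0^{\frac\pi2}e^{\eta\sin^2\theta}\,g(\sin^2\theta)\,\ud\mu_k^\theta\ge0,\qquad g(s):=-(n+2)s^2+(n+k+2)s-k.
\end{align*}
The key algebraic observation is that $g$ factors cleanly: its discriminant is the perfect square $(n-k+2)^2$, giving $g(s)=(n+2)(1-s)\big(s-\tfrac{k}{n+2}\big)$. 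Hence on $s=\sin^2\theta\in[0,1]$ the factor $1-s=\cos^2\theta$ is nonnegative and $g$ changes sign exactly once, at $s=\tfrac{k}{n+2}$.

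Finally I would invoke the sign-change trick of Proposition \ref{prop:I1}. The zeroth moment $\int_0^{\pi/2}g(\sin^2\theta)\,\ud\mu_k^\theta$ vanishes: this follows from the Wallis-type identities $n\!\int\!\sin^2\theta\,\ud\mu_k^\theta=k\!\int\!\ud\mu_k^\theta$ and $(n+2)\!\int\!\sin^4\theta\,\ud\mu_k^\theta=(k+2)\!\int\!\sin^2\theta\,\ud\mu_k^\theta$, which are in turn the integration-by-parts relations $\int_0^{\pi/2}\partial_\theta(\sin^{k}\theta\cos^{n-k}\theta)\,\ud\theta=\int_0^{\pi/2}\partial_\theta(\sin^{k+2}\theta\cos^{n-k}\theta)\,\ud\theta=0$ already used to define $\sigma_k$. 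I may therefore subtract the constant $e^{\eta k/(n+2)}$ for free and write
\begin{align*}
\int_0^{\frac\pi2}e^{\eta\sin^2\theta}g(\sin^2\theta)\,\ud\mu_k^\theta=(n+2)\int_0^{\frac\pi2}\big(e^{\eta\sin^2\theta}-e^{\eta k/(n+2)}\big)\cos^2\theta\Big(\sin^2\theta-\tfrac{k}{n+2}\Big)\,\ud\mu_k^\theta.
\end{align*}
Since $t\mapsto e^{\eta t}$ is increasing for $\eta>0$ and decreasing for $\eta<0$, the factor $e^{\eta\sin^2\theta}-e^{\eta k/(n+2)}$ has the same sign as $\sin^2\theta-\tfrac{k}{n+2}$ exactly when $\eta\ge0$; combined with $\cos^2\theta\ge0$ the integrand is then pointwise nonnegative, so $J\ge0$, whereas for $\eta<0$ it is pointwise nonpositive and not identically zero, so $J<0$. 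This yields the claimed equivalence. The only mildly delicate points are the factorization of $g$ and the vanishing of its zeroth moment; both become routine once the crossing value $\tfrac{k}{n+2}$ is identified, so I expect no serious obstacle beyond careful bookkeeping of the $A_0,A_2,A_4$ algebra.
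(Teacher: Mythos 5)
Your proposal is correct and follows essentially the same route as the paper's proof: the identical Cauchy--Schwarz splitting with equality at $a(\theta)=Ce^{\eta\sin^2\theta}\cos^2\theta$, reduction to the scalar quantity $J$, and the conclusion via the pointwise sign of $\big(e^{\eta\sin^2\theta}-e^{\eta k/(n+2)}\big)\cos^2\theta\big[(n+2)\sin^2\theta-k\big]$. Your explicit factorization of $g$ and the Wallis-identity verification of the vanishing zeroth moment are just a more detailed account of the algebra the paper compresses into one substitution of $\alpha=\sigma_k(\eta)$.
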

\begin{proof}
Using Cauchy-Schwarz inequality, we have
\begin{align*}
\mathcal{I}_2(a)\geq&\,\Big(\int_0^{\frac{\pi}{2}}e^{-\eta\sin^2\theta} a^2(\theta)\ud\mu_k^{\theta}\Big)\left(\int_0^{\frac{\pi}{2}}e^{\eta\sin^2\theta}\ud\mu_k^{\theta}
-\frac{2\alpha}{(n-k)(n-k+2)}\int_0^{\frac{\pi}{2}}e^{\eta\sin^2\theta}\cos^4\theta \ud\mu_k^{\theta}\right).
\end{align*}
Applying $\alpha=\sigma_k(\eta)$ similarly, we
\begin{align*}
&\int_0^{\frac{\pi}{2}}e^{\eta\sin^2\theta}\ud\mu_k^{\theta}-\frac{2\alpha}{(n-k)(n-k+2)}\int_0^{\frac{\pi}{2}}e^{\eta\sin^2\theta}\cos^4\theta \ud\mu_k^{\theta}\\
 &=\frac{2\alpha}{k(n-k+2)(n-k)}\int_0^{\frac{\pi}{2}}e^{\eta\sin^2\theta}\cos^2\theta\Big[(n+2)\sin^2\theta-k\Big]\ud\mu_k^{\theta}\\
&=\frac{2\alpha}{k(n-k+2)(n-k)} \int_0^{\frac{\pi}{2}} \Big(e^{\eta\sin^2\theta}-e^{\eta\frac{k}{n+2}}\Big)\cos^2\theta\Big[(n+2)\sin^2\theta-k\Big]\ud\mu_k^{\theta},
\end{align*}
which is nonnegative if and only if $\eta\ge 0$.
\end{proof}

\begin{proposition}\label{prop:I3}
  $\mathcal{I}_3(b)\ge 0$ for all $b\in L^2\big([0,\pi/2],\ud\mu_k^{\theta}\big)$ with $\int_0^{\frac{\pi}{2}}b(\theta)\ud\mu_k^{\theta}=0$,
  if and only if $$2A_0(A_2-A_4)-n(A_0A_4-{A_2^2})\ge 0.$$
\end{proposition}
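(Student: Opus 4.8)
The plan is to reduce the statement to a sharp weighted Cauchy--Schwarz inequality, exactly in the spirit of Propositions~\ref{prop:I0}--\ref{prop:I2}, the only new feature being the zero--mean constraint $\int_0^{\pi/2}b\,\ud\mu_k^\theta=0$, which I exploit by shifting $\sin^2\theta$ by a free constant before applying Cauchy--Schwarz.

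First I would simplify $\mathcal{I}_3$. Substituting $\alpha=\sigma_k(\eta)=\frac{(n-k)k}{2}\frac{A_0}{A_2-A_4}$ from \eqref{eq:alpha} into the coefficient $\frac{n\alpha}{k(n-k)}$ yields $\frac{nA_0}{2(A_2-A_4)}$, so that, with $A_0=\int_0^{\pi/2}e^{\eta\sin^2\theta}\ud\mu_k^\theta>0$,
\[\mathcal{I}_3(b)=A_0\left[\int_0^{\pi/2}e^{-\eta\sin^2\theta}b^2\,\ud\mu_k^\theta-\frac{n}{2(A_2-A_4)}\Big(\int_0^{\pi/2}\sin^2\theta\,b\,\ud\mu_k^\theta\Big)^2\right].\]
Since $A_2-A_4=\int_0^{\pi/2}e^{\eta\sin^2\theta}\sin^2\theta\cos^2\theta\,\ud\mu_k^\theta>0$, the nonnegativity of $\mathcal{I}_3$ on the admissible class (those $b$ with $\int_0^{\pi/2}b\,\ud\mu_k^\theta=0$) is equivalent to the inequality
\[\int_0^{\pi/2}e^{-\eta\sin^2\theta}b^2\,\ud\mu_k^\theta\ \ge\ \frac{n}{2(A_2-A_4)}\Big(\int_0^{\pi/2}\sin^2\theta\,b\,\ud\mu_k^\theta\Big)^2\]
holding for all admissible $b$; equivalently, the best constant $C$ with $\int e^{-\eta\sin^2\theta}b^2\,\ud\mu_k^\theta\ge C(\int\sin^2\theta\,b\,\ud\mu_k^\theta)^2$ must satisfy $C\ge\frac{n}{2(A_2-A_4)}$.

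The core step is the computation of this best constant. Because $\int_0^{\pi/2}b\,\ud\mu_k^\theta=0$, for any constant $c$ I may replace $\sin^2\theta$ by $\sin^2\theta-c$, and then apply Cauchy--Schwarz with weight $e^{-\eta\sin^2\theta}$:
\[\Big(\int_0^{\pi/2}\sin^2\theta\,b\,\ud\mu_k^\theta\Big)^2=\Big(\int_0^{\pi/2}(\sin^2\theta-c)\,b\,\ud\mu_k^\theta\Big)^2\le\Big(\int_0^{\pi/2}e^{-\eta\sin^2\theta}b^2\,\ud\mu_k^\theta\Big)\Big(\int_0^{\pi/2}e^{\eta\sin^2\theta}(\sin^2\theta-c)^2\,\ud\mu_k^\theta\Big).\]
The last factor equals $A_4-2cA_2+c^2A_0$, which is minimized at $c=A_2/A_0$ with minimal value $(A_0A_4-A_2^2)/A_0$; note $A_0A_4-A_2^2>0$ by Cauchy--Schwarz applied to $e^{\eta\sin^2\theta/2}$ and $e^{\eta\sin^2\theta/2}\sin^2\theta$ (strict, as $\sin^2\theta$ is nonconstant). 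Hence the best constant is $C=A_0/(A_0A_4-A_2^2)$, and the required inequality holds for all admissible $b$ if and only if $\frac{A_0}{A_0A_4-A_2^2}\ge\frac{n}{2(A_2-A_4)}$, i.e. if and only if $2A_0(A_2-A_4)-n(A_0A_4-A_2^2)\ge0$.

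Finally I would verify sharpness to secure the ``only if'' direction. The Cauchy--Schwarz equality case at the optimal $c=A_2/A_0$ is $b(\theta)=e^{\eta\sin^2\theta}\big(\sin^2\theta-A_2/A_0\big)$, and a direct check gives $\int_0^{\pi/2}b\,\ud\mu_k^\theta=A_2-(A_2/A_0)A_0=0$, so this $b$ is admissible and attains the constant $C$; moreover $\int_0^{\pi/2}\sin^2\theta\,b\,\ud\mu_k^\theta=(A_0A_4-A_2^2)/A_0\ne0$. For this $b$ one computes $\mathcal{I}_3(b)=A_0\big(\tfrac{A_0}{A_0A_4-A_2^2}-\tfrac{n}{2(A_2-A_4)}\big)\big(\int_0^{\pi/2}\sin^2\theta\,b\,\ud\mu_k^\theta\big)^2$, so when the displayed condition fails this $b$ makes $\mathcal{I}_3(b)<0$. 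The only genuine subtlety is recognizing that the zero--mean constraint permits the free shift $c$ and that its optimal value $c=A_2/A_0$ is precisely what produces the ``variance'' term $A_0A_4-A_2^2$; the remaining manipulations are routine.
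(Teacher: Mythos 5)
Your proof is correct and follows essentially the same route as the paper: both exploit the zero-mean constraint to shift $\sin^2\theta$ by the constant $A_2/A_0$, apply the weighted Cauchy--Schwarz inequality with weights $e^{\pm\eta\sin^2\theta}$ to produce the variance term $A_0A_4-A_2^2$, and verify sharpness via the admissible extremizer $b=Ce^{\eta\sin^2\theta}(\sin^2\theta-A_2/A_0)$. Your only addition is deriving the optimal shift by minimizing the quadratic $A_4-2cA_2+c^2A_0$ over $c$, where the paper simply uses $c=A_2/A_0$ directly.
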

\begin{proof}
As $\int_{-\frac{\pi}{2}}^{\frac{\pi}{2}}b(\theta)\ud\mu_k^{\theta}=0$, we have
\begin{align*}
n\bigg(\int_0^{\frac{\pi}{2}}\sin^2\theta b(\theta)\ud\mu_k^{\theta}\bigg)^2=&\,n\bigg(\int_0^{\frac{\pi}{2}}\Big(\frac{A_2}{A_0}-\sin^2\theta\Big) b(\theta)\ud\mu_k^{\theta}\bigg)^2\\
\le &\,n\int_0^{\frac{\pi}{2}}e^{\eta\sin^2\theta}\Big(\frac{A_2}{A_0}-\sin^2\theta\Big)^2 \ud\mu_k^{\theta}\cdot\int_0^{\frac{\pi}{2}}e^{-\eta\sin^2\theta} b^2(\theta) \ud\mu_k^{\theta} \\
=&\,n\Big(A_4-\frac{A_2^2}{A_0}\Big)\int_0^{\frac{\pi}{2}}e^{-\eta\sin^2\theta} b^2(\theta) \ud\mu_k^{\theta}.
\end{align*}
The equality can be achieved by $b(\theta)=Ce^{\eta\sin^2\theta}({A_2}/{A_0}-\sin^2\theta)$.  Thus,
\begin{align*}
\mathcal{I}_2(b)
&\ge \bigg(A_0-\frac{n\alpha}{(n-k)k}(A_4-\frac{A_2^2}{A_0})\bigg)\int_0^{\frac{\pi}{2}}e^{-\eta\sin^2\theta} b^2(\theta) \ud\mu_k^{\theta}\\
&= \bigg(A_0-\frac{n(A_0A_4-{A_2^2})}{2(A_2-A_4)}\bigg)\int_0^{\frac{\pi}{2}}e^{-\eta\sin^2\theta} b^2(\theta) \ud\mu_k^{\theta},
\end{align*}
which concludes the proof.
\end{proof}
It remains to study the sign of $2A_0(A_2-A_4)-n(A_0A_4-{A_2^2})$ for different $\eta$. This is summarized in Lemma \ref{lem:eta-value}.

%We first introduces some important properties for the functions $\sigma_k(\eta)$.
\begin{lemma}\label{lem1}
It holds that
\begin{align}\label{relation:A}
A_{l+2}-A_{l+4}&\,=\frac{n+l}{2\eta}A_{l+2}-\frac{k+l}{2\eta}A_{l}.
\end{align}

\end{lemma}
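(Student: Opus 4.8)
The plan is to derive \eqref{relation:A} by integrating an exact derivative whose total integral over $[0,\frac\pi2]$ vanishes, generalizing the identity $\int_0^{\pi/2}\partial_\theta\big(e^{\eta\sin^2\theta}\sin^{k}\theta\cos^{n-k}\theta\big)\,\ud\theta=0$ that already appears in the computation of $\sigma_k(\eta)$. Indeed, that identity is exactly the case $l=0$ of the asserted recurrence, so the natural object to differentiate is
\[
g_l(\theta):=e^{\eta\sin^2\theta}\sin^{l+k}\theta\cos^{n-k}\theta.
\]

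First I would observe that, because $k\ge1$ and $n-k\ge1$, the factor $\sin^{l+k}\theta$ vanishes at $\theta=0$ and $\cos^{n-k}\theta$ vanishes at $\theta=\frac\pi2$; hence $g_l(0)=g_l(\frac\pi2)=0$ and $\int_0^{\pi/2}g_l'(\theta)\,\ud\theta=0$. Next I would expand the derivative by the product rule into three terms,
\[
g_l'=e^{\eta\sin^2\theta}\Big[2\eta\,\sin^{l+k+1}\theta\cos^{n-k+1}\theta+(l+k)\sin^{l+k-1}\theta\cos^{n-k+1}\theta-(n-k)\sin^{l+k+1}\theta\cos^{n-k-1}\theta\Big],
\]
and rewrite each term against the weight $\ud\mu_k^\theta=\sin^{k-1}\theta\cos^{n-k-1}\theta\,\ud\theta$ using $\cos^2\theta=1-\sin^2\theta$. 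This turns the first term into $2\eta(A_{l+2}-A_{l+4})$, the second into $(l+k)(A_l-A_{l+2})$, and the third into $-(n-k)A_{l+2}$. Setting the sum to zero and solving for $A_{l+2}-A_{l+4}$ gives $2\eta(A_{l+2}-A_{l+4})=(n+l)A_{l+2}-(k+l)A_l$, which is precisely \eqref{relation:A} after dividing by $2\eta$.

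There is no genuine obstacle here: the argument is elementary and the only point requiring thought is the choice of the generating antiderivative $g_l$, which is forced by two requirements — that both boundary values vanish, and that the leading term produces the factor $\sin^{l+k+1}\theta\cos^{n-k+1}\theta$ which, after $\cos^2\theta=1-\sin^2\theta$, reconstitutes the combination $A_{l+2}-A_{l+4}$. The remaining work is careful bookkeeping of the powers of $\sin$ and $\cos$, and I expect the only mild care to be needed in confirming that the exponents line up so that each of the three integrals is exactly one of the $A_l$'s (up to the $1-\sin^2\theta$ splitting), and in noting that the identity presupposes $\eta\neq0$, consistent with its use only for the anisotropic critical points.
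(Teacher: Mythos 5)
Your proposal is correct and is essentially identical to the paper's own proof: the authors also integrate the exact derivative of $e^{\eta\sin^2\theta}\sin^{k+l}\theta\cos^{n-k}\theta$ over $[0,\frac{\pi}{2}]$, use the vanishing boundary terms, and split with $\cos^2\theta=1-\sin^2\theta$ to read off the $A_l$'s. Your bookkeeping of the three terms and the final algebra match the paper exactly.
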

\begin{proof}We apply the integration by part to obtain
\begin{align*}
 0=& \int_0^{\frac{\pi}{2}}\ud( e^{\eta\sin^2\theta}\sin^{k+l}\theta \cos^{n-k}\theta )\\
 =& \int_0^{\frac{\pi}{2}} e^{\eta\sin^2\theta}\sin^{k+l-1}\theta \cos^{n-k-1}\theta \Big[(k+l)\cos^2\theta -(n-k)\sin^2\theta\Big]\ud \theta\\
&+2\eta\int_0^{\frac{\pi}{2}} e^{\eta\sin^2\theta}\sin^{k+l+1}\theta \cos^{n-k+1}\theta \ud\theta,
\end{align*}
which yields \eqref{relation:A}.
\end{proof}

\begin{lemma}\label{lem:single-root}
(i) It holds that
\begin{align}\label{lim:pinf}
  \lim_{\eta\to+\infty}\sigma_k'(\eta)=& \lim_{\eta\to+\infty}\frac{\sigma_k}{\eta} = k,\\ \label{lim:minf}
  \lim_{\eta\to-\infty}\sigma_k'(\eta)=& \lim_{\eta\to-\infty}\frac{\sigma_k}{\eta} = k-n.
\end{align}
(ii) There is a unique $\eta_k^*$ such that $\sigma_k(\eta_k^*)=\min_{\eta} \sigma_k(\eta)$. Thus, $\sigma_k'(\eta)$ has the same sign with $\eta-\eta^*_k$.
\end{lemma}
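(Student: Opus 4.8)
\emph{Reduction to a monotone order parameter.} The plan is to first rewrite $\sigma_k$ in terms of a single strictly monotone quantity. With $A_l=A_l(\eta)=\int_0^{\pi/2}e^{\eta\sin^2\theta}\sin^l\theta\,\ud\mu_k^{\theta}$ as above, note $A_l'(\eta)=A_{l+2}$, and apply \eqref{relation:A} (Lemma \ref{lem1}) with $l=0$ to get $A_2-A_4=\tfrac{1}{2\eta}(nA_2-kA_0)$. Substituting into \eqref{eq:alpha} yields
\begin{equation}
\sigma_k(\eta)=\frac{(n-k)k\,\eta}{n\tau-k},\qquad \tau=\tau(\eta):=\frac{A_2}{A_0},
\end{equation}
where $\tau$ is the mean of $\sin^2\theta$ under the probability measure proportional to $e^{\eta\sin^2\theta}\ud\mu_k^{\theta}$. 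Since $\tau'(\eta)$ equals the variance of $\sin^2\theta$, hence $\tau'>0$, the order parameter $\tau$ increases strictly from $0$ to $1$ as $\eta$ ranges over $\mathbb{R}$, with $\tau(0)=k/n$. By the symmetry $\sigma_k(\eta)=\sigma_{n-k}(-\eta)$ I may assume $k\le[\frac n2]$, so that $k/n\le\tfrac12$.

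\emph{Part (i): Laplace asymptotics.} As $\eta\to+\infty$ the measure concentrates at $\theta=\tfrac{\pi}{2}$; writing $\theta=\tfrac\pi2-\phi$ and rescaling $\phi=u/\sqrt\eta$ gives $1-\tau\sim\tfrac{n-k}{2\eta}$ and $\mathrm{Var}(\sin^2\theta)\sim\tfrac{n-k}{2\eta^2}$, while symmetrically $\tau\sim\tfrac{-k}{2\eta}$ and $\mathrm{Var}(\sin^2\theta)\sim\tfrac{k}{2\eta^2}$ as $\eta\to-\infty$. Feeding $\tau\to1$ and $\tau\to0$ into the displayed formula gives $\sigma_k/\eta\to k$ and $k-n$. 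Differentiating,
\begin{equation}
\sigma_k'(\eta)=\frac{(n-k)k}{n\tau-k}-\frac{(n-k)k\,n\,\eta\,\tau'(\eta)}{(n\tau-k)^2},
\end{equation}
and since $\eta\,\tau'(\eta)=\eta\,\mathrm{Var}(\sin^2\theta)\to0$ while $(n\tau-k)^2\to(n-k)^2$, resp. $k^2$, the second term vanishes in the limit and $\sigma_k'$ shares the same limits $k$ and $k-n$.

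\emph{Part (ii): uniqueness of the minimizer.} Existence of a global minimum is immediate from (i), since $\sigma_k\to+\infty$ at $\pm\infty$. For uniqueness I differentiate $\sigma_k=\tfrac{(n-k)k}{2}A_0/(A_2-A_4)$ using $A_l'=A_{l+2}$ and eliminate $A_6$ through \eqref{relation:A} at $l=2$; the equation $\sigma_k'(\eta^*)=0$ then reduces to
\begin{equation}
(n+2)(n\tau-k)=2n\eta^*\,\tau(1-\tau),\qquad \tau=\tau(\eta^*),
\end{equation}
which in turn forces $\tau'(\eta^*)=\tfrac{2\tau(1-\tau)}{n+2}$. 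Setting $\Phi(\eta):=(n+2)(n\tau-k)-2n\eta\,\tau(1-\tau)$, a computation shows that $\sigma_k'$ has the same sign as $\Phi$ for $\eta\neq0$, and that at a zero $\eta^*$ of $\Phi$ the sign of $\sigma_k''$ equals that of $\Phi'(\eta^*)$. Substituting the two relations above into $\Phi'$ produces, after a cancellation, the clean identity
\begin{equation}
\Phi'(\eta^*)=2\,(2\tau-1)(n\tau-k).
\end{equation}

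\emph{Conclusion and main obstacle.} I would then combine this sign identity with the monotonicity of $\tau$. The critical relation shows $n\tau-k$ has the sign of $\eta^*$, and a short computation gives $\sigma_k'(0)<0$ (its sign is that of $2k-n$). Hence on $(-\infty,0)$ any critical point has $\tau<k/n\le\tfrac12$, so $(2\tau-1)(n\tau-k)>0$ there and every such point would be a strict local minimum, which is incompatible with $\sigma_k'<0$ at both $-\infty$ and $0^-$; thus there is no critical point for $\eta\le0$. For $\eta>0$ the two nodal values of $(2\tau-1)(n\tau-k)$ are $\tau=k/n$ and $\tau=\tfrac12$: since $\tau$ increases and $\sigma_k'<0$ just past $\eta=0$, the first sign change can occur only once $\tau\ge\tfrac12$ (necessarily $-\to+$), after which the identity forbids any further $+\to-$ crossing because $\tau>\tfrac12$ thereafter. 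This gives a unique $\eta_k^*>0$ with $\sigma_k'$ changing sign from $-$ to $+$; the balanced case $k=n/2$, where $\sigma_k$ is even, places the minimum at $\eta_k^*=0$. The main obstacle is the sign identity $\Phi'(\eta^*)=2(2\tau-1)(n\tau-k)$: it relies on the exact moment recursions in \eqref{relation:A} to express everything through $\tau$ and $\eta$, and on the nontrivial cancellation yielding the clean product; once it is in hand, uniqueness follows purely from the monotonicity of the order parameter. I note in passing that $\Phi=2\eta\,[\,2A_0(A_2-A_4)-n(A_0A_4-A_2^2)\,]/A_0^2$, the bracket being exactly the quantity controlling $\mathcal{I}_3$ in Proposition \ref{prop:I3}, which ties the fold points of $\sigma_k$ to the onset of the $\mathcal{I}_3$ instability.
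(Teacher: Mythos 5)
Your reduction to the order parameter $\tau=A_2/A_0$ is correct and genuinely different from the paper's argument. For part (i) you and the paper both ultimately use $\tau\to 1$ (resp.\ $0$) as $\eta\to\pm\infty$ together with the moment identity \eqref{relation:A}; the paper simply asserts that $\sigma_k'$ has the same limits as $\sigma_k/\eta$, whereas your explicit formula $\sigma_k'=\tfrac{(n-k)k}{n\tau-k}-\tfrac{(n-k)k\,n\eta\tau'}{(n\tau-k)^2}$ plus the Laplace estimate $\eta\tau'\to 0$ makes this step cleaner. For part (ii) the routes diverge: the paper computes $\sigma_k'=\tfrac{(n-k)k\,g}{2(A_4-A_2)^2}$ with $g=A_2(A_2-A_4)-A_0(A_4-A_6)$ and shows by a single Cauchy--Schwarz inequality that $(e^{-\eta}g)'>0$, so $g$ is negative-then-positive and $\sigma_k'$ has exactly one zero with a transversal sign change --- two lines, and it settles the full sign statement at once. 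You instead work with $\Phi=(n+2)(n\tau-k)-2n\eta\tau(1-\tau)$ (which I checked equals $4\eta^2 g/A_0^2$, so it does carry the sign of $\sigma_k'$ for $\eta\neq 0$), derive the identity $\Phi'(\eta^*)=2(2\tau-1)(n\tau-k)$ at critical points (this is correct: the cancellation via $\tau'(\eta^*)=\tfrac{2\tau(1-\tau)}{n+2}$ works), and run a crossing argument using the strict monotonicity of $\tau$. What your approach buys is structural insight --- the fold of $\sigma_k$ is located by the two nodal values $\tau=k/n$ and $\tau=\tfrac12$ of the order parameter, and your closing remark correctly identifies $\Phi$ with the quantity $2A_0(A_2-A_4)-n(A_0A_4-A_2^2)$ controlling $\mathcal{I}_3$ (cf.\ Proposition \ref{prop:I3} and Lemma \ref{lem:eta-value}). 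What it costs is length and one loose end, noted below.

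The loose end: your identity only controls zeros of $\Phi$ where $(2\tau-1)(n\tau-k)\neq 0$. On $(-\infty,0)$ and on the portion of $(0,\infty)$ where $\tau\neq\tfrac12$ every zero is transversal and your crossing argument is airtight, but at the single point $\eta_{1/2}$ with $\tau(\eta_{1/2})=\tfrac12$ you cannot exclude a tangential zero ($\Phi(\eta_{1/2})=\Phi'(\eta_{1/2})=0$ with $\Phi$ returning negative), which would give $\sigma_k'$ a second, non-sign-changing zero below $\eta_k^*$ and break the conclusion that $\sigma_k'$ has the same sign as $\eta-\eta_k^*$ (though the minimizer would still be unique). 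You should either examine $\Phi''$ at such a point or, more simply, borrow the paper's observation that $e^{-\eta}g$ is strictly increasing, which forces $g>0$ for all $\eta$ past its first zero and rules out any tangency.
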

\begin{proof}
It is not hard to prove that $\lim_{\eta\to +\infty} \frac{A_2}{A_0} =1$. Thus by (\ref{eq:alpha}) and Lemma \ref{lem1}, we have
\begin{align*}
 \frac{\sigma_k}{\eta}=\frac{k(n-k)A_0}{2\eta(A_2-A_4)} =\frac{k(n-k)A_0}{nA_2-kA_0} \to k,\quad \text{as }\eta\to+\infty,
\end{align*}
which yields \eqref{lim:pinf}. Similarly, \eqref{lim:minf} follows from the fact that $\lim_{\eta\to -\infty} \frac{A_2}{A_0} =0$.
Thus the minimum of $\sigma_k(\eta)$ can be attained, and $\sigma_k'(\eta)=0$ has at least one solution $\eta_k^*$. It suffices to prove it is unique.

On the other hand, from (\ref{eq:alpha}) and (\ref{relation:A}) we obtain
\begin{align}\nonumber
\sigma_k'(\eta)=\frac{(n-k)k\big(A_2(A_2-A_4)-A_0(A_4-A_6)\big)}{2(A_4-A_2)^2}.
\end{align}
Using Cauchy-Schwarz inequality, we have
\begin{align}\nonumber
&\frac{\partial }{\partial \eta }\Big(e^{-\eta}\big(A_2(A_2-A_4)-A_0(A_4-A_6)\big)\Big)\\ \nonumber
=&\,e^{-\eta}\Big(-(A_2-A_4)^2+A_0(A_4-2A_6+A_8)\Big)\\\nonumber
=&-e^{-\eta}\left(\int_0^{\frac{\pi}{2}} e^{\eta\sin^2\theta} \sin^2\theta(1-\sin^2\theta) \ud\mu_k^{\theta}\right)^2\\
&+e^{-\eta}\int_0^{\frac{\pi}{2}} e^{\eta\sin^2\theta} \ud\mu_k^{\theta}\cdot\int_0^{\frac{\pi}{2}} e^{\eta\sin^2\theta} \sin^4\theta(1-\sin^2\theta)^2\ud\mu_k^{\theta}>0.
 \label{def-sigprime}
\end{align}
Hence, $A_2(A_2-A_4)-A_0(A_4-A_6)$ has only one root, which implies $\sigma_k'(\eta)=0$ has only one root $\eta_k^*$.
\end{proof}

\begin{lemma}\label{lem3}
For $k< (\text{or }>, =)[\frac{n}{2}]$, we have $\eta_k^{*}>(\text{or }<, =)0$.
\end{lemma}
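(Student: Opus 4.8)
The plan is to determine the sign of $\sigma_k'(0)$ and then invoke Lemma \ref{lem:single-root}(ii), which asserts that $\sigma_k'(\eta)$ shares the sign of $\eta-\eta_k^*$. Consequently $\eta_k^*>0$ (resp. $<0$, $=0$) is equivalent to $\sigma_k'(0)<0$ (resp. $>0$, $=0$). From the formula for $\sigma_k'$ obtained in the proof of Lemma \ref{lem:single-root}, its denominator $2(A_4-A_2)^2$ is positive and the prefactor $(n-k)k$ is positive, so the sign of $\sigma_k'(\eta)$ coincides with that of
$$N(\eta):=A_2(A_2-A_4)-A_0(A_4-A_6).$$
Thus the whole lemma reduces to computing $N(0)$.

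First I would evaluate the moments $A_l(0)$. At $\eta=0$ the identity \eqref{relation:A} degenerates because of the factor $2\eta$ in the denominator, so instead I would apply the same integration by parts directly to $\int_0^{\pi/2}\ud(\sin^{k+l}\theta\cos^{n-k}\theta)=0$ (the boundary terms vanish since $1\le k\le n-1$). This yields the clean $\eta=0$ recurrence
$$A_{l+2}(0)=\frac{k+l}{n+l}\,A_l(0).$$
Iterating from $A_0(0)$ gives $A_2(0)=\frac{k}{n}A_0(0)$, $A_4(0)=\frac{k(k+2)}{n(n+2)}A_0(0)$, and $A_6(0)=\frac{k(k+2)(k+4)}{n(n+2)(n+4)}A_0(0)$.

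Substituting these into $N(0)$ and factoring out the common positive term $\frac{k(n-k)}{n(n+2)}A_0(0)^2$, I expect to obtain
$$N(0)=\frac{2\,k(n-k)(2k-n)}{n^2(n+2)(n+4)}\,A_0(0)^2.$$
Since $k(n-k)>0$ and $A_0(0)^2>0$, the sign of $N(0)$, and hence of $\sigma_k'(0)$, is exactly the sign of $2k-n$. Therefore $\eta_k^*>0$ when $2k<n$, $\eta_k^*<0$ when $2k>n$, and $\eta_k^*=0$ when $2k=n$, which is the assertion of the lemma.

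As a consistency check and an alternative route to the boundary case, I would use the symmetry $\sigma_k(\eta)=\sigma_{n-k}(-\eta)$ noted in the introduction: differentiating gives $\sigma_k'(\eta)=-\sigma_{n-k}'(-\eta)$, so by the uniqueness of the critical point in Lemma \ref{lem:single-root} one has $\eta_{n-k}^*=-\eta_k^*$. In particular, when $2k=n$ one has $n-k=k$, forcing $\eta_k^*=-\eta_k^*$, i.e. $\eta_k^*=0$; this relation also shows the two strict-inequality cases are equivalent, so it would suffice to treat, say, $k<n/2$. The computation is entirely elementary, so there is no serious obstacle; the only point demanding care is that the recurrence \eqref{relation:A} cannot be used at $\eta=0$ and must be replaced by its $\eta=0$ specialization derived above.
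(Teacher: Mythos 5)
Your proposal is correct and follows essentially the same route as the paper: both reduce the lemma to the sign of $\sigma_k'(0)$ via Lemma \ref{lem:single-root}, and both evaluate $A_2(A_2-A_4)-A_0(A_4-A_6)$ at $\eta=0$ using the integration-by-parts identities (your general recurrence $A_{l+2}(0)=\frac{k+l}{n+l}A_l(0)$ is just the systematic form of the two identities the paper invokes), arriving at a quantity whose sign is that of $2k-n$. The symmetry check $\eta_{n-k}^*=-\eta_k^*$ is a nice confirmation but not a distinct argument.
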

\begin{proof}
From Lemma \ref{lem:single-root}, we get that $\eta-\eta^*_k$ has the same sign with $\sigma_k'(\eta)$. Thus, $\eta^*_k>0$ if and only if $\sigma_k'(0)<0$. On the other hand, we have
\begin{align*}
\frac{2(A_4-A_2)^2(0)}{(n-k)k}\sigma_k'(0)&\,=\Big(A_2(A_2-A_4)-A_0(A_4-A_6)\Big)\Big|_{\eta=0}\\
&\,=\Big(\int_0^{\frac{\pi}{2}}\sin^{k+1}\theta \cos^{n-k-1}\theta \ud\theta\Big)\Big(\int_0^{\frac{\pi}{2}}\sin^{k+1}\theta\cos^{n-k+1}\theta \ud\theta\Big)\\
&\,\quad-\Big(\int_0^{\frac{\pi}{2}}\sin^{k-1}\theta\cos^{n-k-1}\theta \ud\theta\Big)\Big(\int_0^{\frac{\pi}{2}} \sin^{k+3}\theta \cos^{n-k+1}\theta \ud\theta\Big)\\
&\,=\frac{2(2k-n)}{(k+2)k}\Big(\int_0^{\frac{\pi}{2}}\sin^{k+1}\theta \cos^{n-k-1}\theta \ud\theta\Big) \Big(\int_0^{\frac{\pi}{2}} \sin^{k+3}\theta \cos^{n-k+1}\theta \ud\theta\Big).
\end{align*}
Here we have used the facts that
\begin{align*}
k\int_0^{\frac{\pi}{2}} \sin^{k-1}\theta\cos^{n-k-1}\theta \ud\theta - n\int_0^{\frac{\pi}{2}}\sin^{k+1}\theta\cos^{n-k-1}\theta& \ud\theta=\int_0^{\frac{\pi}{2}}\ud (\sin^k\theta \cos^{n-k}\theta )=0,\\
(k+2)\int_0^{\frac{\pi}{2}} \sin^{k+1}\theta\cos^{n-k+1}\theta \ud\theta - (n+4)\int_0^{\frac{\pi}{2}}\sin^{k+3}\theta&\cos^{n-k+1}\theta \ud\theta\\
=\,&\int_0^{\frac{\pi}{2}}\ud (\sin^{k+2}\theta \cos^{n-k+2}\theta )=0.
\end{align*}
Thus, $\eta_k^{*}>(\text{or }<, =)0$ if and only if $k<(\text{or }>, =)[\frac{n}{2}]$.
\end{proof}

\begin{lemma}\label{lem:eta-value}
The inequality $n(A_0A_4-A_2^2)<2A_0(A_2-A_4)$ is equivalent to $\eta(\eta-\eta^{*}_k)>0$. When $k\le [\frac{n}{2}]$, this is ensured by $\eta>\eta_k^*$ as $\eta_k^*>0$.
\end{lemma}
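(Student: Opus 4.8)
The plan is to show that the quantity $G(\eta):=2A_0(A_2-A_4)-n(A_0A_4-A_2^2)$ is, up to a manifestly positive factor times $\eta$, nothing but the derivative $\sigma_k'(\eta)$; the claimed equivalence then follows immediately from the sign analysis of $\sigma_k'$ already carried out in Lemmas \ref{lem:single-root} and \ref{lem3}. Concretely, I first expand
\[ G(\eta)=2A_0A_2-(n+2)A_0A_4+nA_2^2, \]
and then invoke the recurrence \eqref{relation:A} of Lemma \ref{lem1} at $l=0$ and $l=2$, which read
\[ 2\eta(A_2-A_4)=nA_2-kA_0,\qquad 2\eta(A_4-A_6)=(n+2)A_4-(k+2)A_2. \]
Substituting these into $2\eta\big[A_2(A_2-A_4)-A_0(A_4-A_6)\big]$ and collecting terms, the two $A_0A_2$ contributions combine via $-k+(k+2)=2$, and one obtains exactly $nA_2^2+2A_0A_2-(n+2)A_0A_4$. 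Hence the key identity
\[ G(\eta)=2\eta\big[A_2(A_2-A_4)-A_0(A_4-A_6)\big]. \]

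Next I would recall the formula for $\sigma_k'$ derived inside the proof of Lemma \ref{lem:single-root}, namely $\sigma_k'(\eta)=\tfrac{(n-k)k}{2(A_4-A_2)^2}\big[A_2(A_2-A_4)-A_0(A_4-A_6)\big]$. Combining this with the identity above yields
\[ G(\eta)=\frac{4\eta(A_2-A_4)^2}{(n-k)k}\,\sigma_k'(\eta). \]
Since $A_2-A_4=\int_0^{\pi/2}e^{\eta\sin^2\theta}\sin^2\theta\cos^2\theta\,\ud\mu_k^{\theta}>0$, the prefactor $\tfrac{4(A_2-A_4)^2}{(n-k)k}$ is strictly positive, so the sign of $G(\eta)$ coincides with that of $\eta\,\sigma_k'(\eta)$. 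By Lemma \ref{lem:single-root}(ii), $\sigma_k'(\eta)$ has the same sign as $\eta-\eta_k^*$, whence $G(\eta)$ has the same sign as $\eta(\eta-\eta_k^*)$; in particular $G(\eta)>0$ if and only if $\eta(\eta-\eta_k^*)>0$, which is precisely the asserted equivalence $n(A_0A_4-A_2^2)<2A_0(A_2-A_4)\Leftrightarrow\eta(\eta-\eta_k^*)>0$. Finally, for $k\le[\tfrac n2]$ Lemma \ref{lem3} gives $\eta_k^*\ge 0$, so any $\eta>\eta_k^*$ forces both $\eta>0$ and $\eta-\eta_k^*>0$, hence $\eta(\eta-\eta_k^*)>0$ and the inequality holds.

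The computation is entirely elementary, so there is no genuine analytic obstacle; the only step requiring insight—and the real content of the argument—is recognizing that $G$ equals $2\eta$ times the very numerator that appears in $\sigma_k'$. This observation is what allows us to import the monotonicity and root information already established for $\sigma_k$ rather than analyzing $G$ from scratch. Everything else is bookkeeping with the recurrence \eqref{relation:A} together with the positivity $A_2-A_4>0$ that is already implicit in \eqref{eq:alpha}.
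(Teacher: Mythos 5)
Your proposal is correct and follows essentially the same route as the paper: both use the recurrence \eqref{relation:A} at $l=0,2$ to establish the identity $2A_0(A_2-A_4)-n(A_0A_4-A_2^2)=2\eta\big(A_2(A_2-A_4)-A_0(A_4-A_6)\big)$ and then import the sign information on this bracket from the proof of Lemma \ref{lem:single-root}. Your explicit rewriting of the bracket as a positive multiple of $\sigma_k'(\eta)$ and your careful use of $\eta_k^*\ge 0$ (rather than $>0$) for $k=[\tfrac n2]$ are minor presentational refinements of the same argument.
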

\begin{proof}
We obtain from Lemma \ref{lem1} that
\begin{align*}
&A_4-A_6=\frac{n+2}{2\eta}A_4-\frac{k+2}{2\eta}A_2,\quad A_2-A_4=\frac{n}{2\eta}A_2-\frac{k}{2\eta}A_0.
\end{align*}
Hence
\begin{align*}
2A_0(A_2-A_4)-n(A_0A_4-A_2^2) ={2\eta}\Big(A_2(A_2-A_4)-A_0(A_4-A_6)\Big)>0,
\end{align*}
which concludes the proof by recalling \eqref{def-sigprime} and Lemma \ref{lem:single-root}.
\end{proof}

\section{Proof of the main theorems}\label{sec:proof}

\subsection{Proof of Theorem \ref{thm:1}}
We begin with anisotropic critical points $f_0=h_\eta^{(k)}(m)$ with $\eta\neq 0$. In this case $\alpha=\sigma_k(\eta)$.

If  $2\le k\le [\frac{n}{2}]$, by Propositions \ref{prop:I1} and \ref{prop:I2}, we
know that $\mathcal{I}_1$ and $\mathcal{I}_2$ can not be always nonnegative. Then we may choose suitable perturbation $\phi=a(\theta)\Omega(\omega)$ or $a(\theta)\Xi(\xi)$ such that $\langle \mathcal{H}_{f_0}\phi,\phi\rangle <0$, which implies the claim $(ii)$ of Theorem \ref{thm:1}.

When $k=1$, there is no $\mathcal{I}_1$ term in \eqref{decomp:H}. We know from  Propositions \ref{prop:I0}, \ref{prop:I2}, \ref{prop:I3},
and Lemma \ref{lem:eta-value} that, for $\eta>\eta_1^*$, all functionals
$\mathcal{I}_\gamma(\gamma=0,2,3)$ are nonnegative. Thus, $h_{\eta}^{(k)}$ is stable. When $\eta<\eta_1^*$, one has $\eta<0$ or $\eta(\eta-\eta_1^*)<0$. So, we may find $a(\theta)$ or $b(\theta)$ such that $\mathcal{I}_2(a)$ or $\mathcal{I}_3(b)$ is negative, which implies that the equilibria is not stable. This gives the claim $(iii)$ of Theorem \ref{thm:1}.

\medskip

For the claim $(i)$ of Theorem \ref{thm:1}, we note that, the stability of the isotropic equilibria $h_0=1/|\mathbb{S}^{n-1}|$ is indeed equivalent to the nonnegativity of $\mathcal{I}_1(a)$ with $k=n$ and $\eta=0$. Therefore we can directly
deduce from the proof of Proposition \ref{prop:I1} that it is equivalent to $\alpha<\frac{n(n+2)}{2}$.

\subsection{Proof of Theorem \ref{thm:2}}
We assume $\nu=(1,0,\cdots, 0)$.
For $\eta>\eta_1^*$, from the proofs of Propositions \ref{prop:I2} and \ref{prop:I3},
we get that $\mathcal{I}_2(a), \mathcal{I}_3(a)\ge c_0 \|a\|_{L^2(\ud\mu_k^{\theta})}^2$ for some $c_0=c_0(\eta,n)>0$. From the proof of Propositions \ref{prop:I0}, we know that $\mathcal{I}_0(a_0^K)=0$ if and only if $a_0^K(\theta)=Ce^{\eta\sin^2\theta}\sin\theta\cos\theta$, i.e.,
\begin{align*}
\phi=e^{\eta\sin^2\theta}\sin\theta\cos\theta\sum_{k=1}^{n-1} C_k\xi_k\in V_{\nu}^\top.
\end{align*}
Thus by Proposition \ref{prop:decomp}, $\langle \mathcal{H}_{f_0}\phi,\phi\rangle =0$ if and only if $\phi \in V_{\nu}^\top$.
It is also not hard to see that  $\mathcal{I}_0(a)\ge c_0 \|a\|_{L^2(\ud\mu_k^{\theta})}^2$ providing $\langle a, e^{\eta\sin^2\theta}\sin\theta\cos\theta\rangle_{L^2(\ud\mu_k^{\theta})}=0$.
Thus $\langle \mathcal{H}_{f_0}\phi,\phi\rangle \ge c_0\|\phi\|^2_{L^2(\mathbb{S}^{n-1})}$ for $\phi \in V_{\nu}^\bot$. The proof is completed.

\section*{Appendix}
The following proposition is first proved by Wang-Hoffman \cite{WH}. Here, we give another proof based on the method by J. Ball \cite{Bal}.
\begin{proposition}
If a symmetric trace free matrix $M$ satisfies:
\begin{align}\label{eq:M1}
 \frac{M}{\alpha} =\frac{\int_{\mathbb{S}^{n-1}}\Big(m\otimes m-\frac{1}{n}I_n\Big)e^{M:(m\otimes m)}\ud m}{
  \int_{\mathbb{S}^{n-1}}e^{M:(m\otimes m)}\ud m},
\end{align}
then $M$ has only two distinct eigenvalues $\lambda_1$ and $\lambda_2$.
\end{proposition}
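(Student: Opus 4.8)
The plan is to diagonalise $M$, reduce the matrix equation \eqref{eq:M1} to a coupled system of scalar identities for its eigenvalues, and then rule out a third distinct eigenvalue by a concavity argument. First I would choose an orthonormal eigenbasis, so that $M=\mathrm{diag}(\mu_1,\dots,\mu_n)$ with $\sum_i\mu_i=0$, and write $\langle\,\cdot\,\rangle$ for the average against the Gibbs measure proportional to $e^{\sum_l\mu_l m_l^2}\,\ud m$ on $\mathbb{S}^{n-1}$. Reading off the diagonal entries of \eqref{eq:M1} gives, for each $i$,
\[
\mu_i=\alpha\Big(\langle m_i^2\rangle-\tfrac1n\Big),\qquad \langle m_i^2\rangle=\partial_{\mu_i}\log Z(\mu),
\]
so the eigenvalues are a fixed point of the gradient map of the convex, permutation-invariant function $\log Z(\mu)$. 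The goal becomes purely scalar: at such a fixed point the multiset $\{\mu_1,\dots,\mu_n\}$ takes at most two values.

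Second, I would exploit the coordinate-swap symmetry. For a pair $i\neq j$, subtracting the two scalar equations yields $\mu_i-\mu_j=\alpha\langle m_i^2-m_j^2\rangle$. Since the involution $m_i\leftrightarrow m_j$ preserves $\ud m$ and every factor of the weight except $e^{\mu_i m_i^2+\mu_j m_j^2}$, I would split the latter exponent into its symmetric part $\tfrac{\mu_i+\mu_j}2(m_i^2+m_j^2)$ and its antisymmetric part, and symmetrise. With $u=m_i^2-m_j^2$ and $\delta=\tfrac{\mu_i-\mu_j}2$ this gives
\[
\mu_i-\mu_j=\alpha\,\frac{\langle u\,\sinh(\delta u)\rangle_{V}}{\langle\cosh(\delta u)\rangle_{V}}\eqqcolon \alpha\,\Psi(\mu_i-\mu_j),
\]
where $V$ is the symmetrised weight. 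Because $u\sinh(\delta u)\ge 0$, this already forces $\mathrm{sign}(\mu_i-\mu_j)=\mathrm{sign}\langle m_i^2-m_j^2\rangle$, i.e. the eigenvalues of $M$ and of $\langle m\otimes m\rangle$ are co-ordered. I would then verify, via a Cauchy–Schwarz/variance computation on the $\cosh$-weighted average, that $\Psi$ is odd, $\Psi(0)=0$, strictly increasing, and strictly concave on $(0,\infty)$; consequently $D\mapsto \Psi(D)/D$ is strictly decreasing, so the scalar equation $D=\alpha\,\Psi(D)$ has at most one positive solution.

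Third, I would obtain the contradiction from the existence of three distinct eigenvalues $\mu_1>\mu_2>\mu_3$. Setting $D_{pq}=\mu_p-\mu_q$, each pair satisfies $D_{pq}=\alpha\,\Psi(D_{pq})$, while trivially $D_{13}=D_{12}+D_{23}$; if all three positive differences solved the \emph{same} scalar equation with a unique positive root $D^\ast$, then $D_{12}=D_{23}=D_{13}=D^\ast$, contradicting $D_{13}=2D^\ast$. The step I expect to be the main obstacle is precisely that the response function $\Psi$ is \emph{not} literally the same for the three pairs: the frozen weight $V=V_{pq}$ depends on the pair, through both $\mu_p+\mu_q$ and the remaining eigenvalues. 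I would resolve this either by proving the strict concavity and the resulting superadditivity inequality $\Psi_{13}(D_{13})<\tfrac1\alpha(D_{12}+D_{23})=\tfrac{1}{\alpha}D_{13}$ uniformly in the backgrounds (contradicting the equality for the pair $(1,3)$), or by running the whole argument through the convexity of $\log Z(\mu)$ restricted to the three relevant coordinates together with the $S_3$-symmetrisation over those indices, which collapses the three pairwise relations to a single strictly concave scalar equation. Carrying out this reconciliation of the differing backgrounds is where the real work lies; the remaining algebra (diagonalisation, symmetrisation, and the variance estimate for concavity) is routine.
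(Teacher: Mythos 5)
Your reduction to the scalar system $\mu_i=\alpha(\langle m_i^2\rangle-\tfrac1n)$ and the swap-symmetrisation giving $\mu_i-\mu_j=\alpha\,\Psi_{ij}(\mu_i-\mu_j)$ are both correct, and the sign co-ordering you deduce is fine. But the argument has two genuine gaps, and the first is fatal as stated. The claim that $\Psi$ is strictly concave on $(0,\infty)$ (equivalently that $\Psi(D)/D$ is strictly decreasing) is \emph{not} a Cauchy--Schwarz/variance fact: a variance computation gives $\Psi'>0$, but $\Psi''$ is the third cumulant of $u$ under the tilted measure, and near $\delta=0$ its sign is governed by the fourth cumulant of $u=m_i^2-m_j^2$ under the background $V$. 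For the uniform background one computes
\begin{align*}
E[u^2]=\frac{4}{n(n+2)},\qquad E[u^4]=\frac{144}{n(n+2)(n+4)(n+6)},\qquad
\frac{E[u^4]}{3(E[u^2])^2}=\frac{3n(n+2)}{(n+4)(n+6)},
\end{align*}
which exceeds $1$ exactly when $n\ge5$. So for $n\ge 5$ --- precisely the regime this paper addresses --- $u$ has positive excess kurtosis, $\Psi$ is \emph{convex} near $0$ on $(0,\infty)$, and $\Psi(D)/D$ is increasing there; the scalar equation $D=\alpha\Psi(D)$ can then admit more than one positive root. (The non-monotonicity of $\sigma_k(\eta)$, which is central to the paper's phase diagram, is another symptom of the same phenomenon.) The second gap is the one you yourself flag: even granting uniqueness of the positive root for each pair, the three functions $\Psi_{12},\Psi_{23},\Psi_{13}$ have different backgrounds, so $D_{12},D_{23},D_{13}$ solve three \emph{different} equations and nothing forces them to coincide; the proposed "superadditivity uniform in the background" or $S_3$-symmetrisation is not carried out and is exactly where the difficulty lives. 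As it stands the proof does not close.

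For contrast, the paper's proof (following Ball) sidesteps both issues by producing a \emph{pair-independent} quantity: the divergence-theorem identity $0=\int_{\mathbb{S}^{n-1}}\big((m_i^2-m_j^2)+2(\lambda_j-\lambda_i)m_i^2m_j^2\big)\rho\,\ud m$, combined with the eigenvalue equation, yields $\beta=2\int_{\mathbb{S}^{n-1}}m_i^2m_j^2\rho\,\ud m$ with the \emph{same} constant $\beta$ for every pair of distinct eigenvalues. Three distinct eigenvalues then force $\int m_3^2(m_1^2-m_2^2)\rho\,\ud m=0$, while your own swap-symmetrisation trick applied to that integral shows it has a strict sign. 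If you want to salvage your approach, the missing ingredient is some analogue of this cross-pair identity rather than a per-pair monotonicity/concavity statement.
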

\begin{proof}
Without loss of generality, we may assume that $M=\mathrm{diag}\,\{\lambda_1,\lambda_2,\cdots,\lambda_n\}$ with $\lambda_1,\, \lambda_2,\, \lambda_3$ being different to each other. Let $\rho(x)=e^{\sum_{i=1}\lambda_1x_i^2}$ for $|x|\le 1$. Then applying the divergence theorem we get:
\begin{align*}
0=&\int_{|x|\le 1}\Big( \partial_1\partial_3(x_1x_3\rho(x))-\partial_3\partial_1(x_1x_3\rho(x))\Big)\ud x\\
=&\int_{\mathbb{S}^{n-1}}\Big(m_1\partial_3(m_1m_3\rho(m))-m_3\partial_1(m_1m_3\rho(m))\Big)\ud m\\
=&\int_{\mathbb{S}^{n-1}}\Big((m_1^2-m_3^2)+2(\lambda_3-\lambda_1)m_1^2m_3^2\Big)\rho(m)\ud m.
\end{align*}
Let $\beta=\alpha^{-1}\int_{\mathbb{S}^{n-1}}e^{M:(m\otimes m)}\ud m$. Then we obtain from \eqref{eq:M1} that
\begin{align*}
  \beta(\lambda_1-\lambda_3)=2\int_{\mathbb{S}^{n-1}}(\lambda_1-\lambda_3)m_1^2m_3^2\rho(m)\ud m,
\end{align*}
which implies $\beta=2\int_{\mathbb{S}^{n-1}}m_1^2m_3^2\rho(m)\ud m$ since $\lambda_1\neq\lambda_3$. Similarly, we have $\beta=2\int_{\mathbb{S}^{n-1}}m_2^2m_3^2\rho(m)\ud m$ as $\lambda_2\neq\lambda_3$. Thus
\begin{align}\label{eq-app1}
  \int_{\mathbb{S}^{n-1}}m_3^2(m_1^2-m_2^2)\rho(m)\ud m=0.
\end{align}
Interchanging the integration variables $m_1$ and $m_2$ in the above equality, we have
\begin{align}\label{eq-app2}
  \int_{\mathbb{S}^{n-1}}m_3^2(m_2^2-m_1^2)e^{\lambda_1m_2^2+\lambda_2m_1^2+\sum_{i\ge 3}\lambda_im_i^2}\ud m=0.
\end{align}
Adding up \eqref{eq-app1} and \eqref{eq-app2}, we get
\begin{align*} \int_{\mathbb{S}^{n-1}}m_3^2(m_1^2-m_2^2)(1-e^{(\lambda_2-\lambda_1)(m_1^2-m_2^2)})\rho(m)\ud m=0,
\end{align*}
which is impossible since $x(1-e^{(\lambda_2-\lambda_1)x})$ has the same sign with $\lambda_1-\lambda_2\neq 0$ for $x\neq 0$.
\end{proof}

\section*{Acknowledgments}
 W. Wang is supported by NSF of China under Grant No. 11931010 and 12271476.

\end{document}